\newtheorem*{theorem-non}{Theorem}
\newtheorem{theorem}{Theorem}[section]
\newtheorem{corollary}[theorem]{Corollary}
\newtheorem{lemma}[theorem]{Lemma}
\newtheorem{proposition}[theorem]{Proposition}
\newtheorem*{proof}{\bf{Proof}}
\newtheorem{remark}{Remark}
\newtheorem{definition}{Definition}
\title{H\"{o}lderian convergence of fractional extended nabla operator to fractional derivative}
\author{L. Khitri-Kazi-Tani\thanks{corresponding author: kazitani.leila13@gmail.com}
\and H. Dib 
\thanks{h\_dib@mail.com}}
\date{}
\providecommand{\keywords}[1]{\textbf{\textit{Keywords.}} #1}
\begin{document}

\maketitle

\begin{abstract}
In this paper, we construct the fractional extended nabla operator as fractional power of linear spline of backward difference operator. Then we prove the strong convergence of this  operator to fractional derivative in a H\"{o}lder space setting. Finally numerical examples are presented.
\end{abstract}
\keywords{fractional power, fractional derivatives, H\"{o}lderian convergence, fractional differential
equation,\\\textit{\textbf{MSC (2010).}} 26A33,  47B38, 46E15, 39A70, 97N50, 47A60 }
\section{Introduction}

\qquad It is well-known that fractional calculus is a developing field
both from the theoretical and applied point of view. The fractional
differential equations turned out to be the best tool for modeling
memory-dependent processes \cite{das}. We refer to the monograph \cite{zhoo},
which contains almost complete qualitative fractional differential equation
theory, and to the monograph \cite{diethelm} \ for an application oriented exposition.

Besides this rapid development, the notion of difference operators has been
extended to fractional calculus in different ways \cite{hilfer},
\cite{orteguiera}, \cite{samko93}. The discrete calculus provides a natural
setting to define such operators. However, in literature there is no single
definition of fractional difference operators and this situation can be
confusing (see for example \cite{abdelj2011},\cite{abdelj2012} and \cite{mozy}).

Another way to define this operators is to consider the fractional power of
positive discrete operators see \cite{ash}.

Effectively, functional calculus is a consistent way to define operators of the
form $A^{\alpha}$ for a given linear operator $A$ in a Banach space$.$ The
fundamental aspects of the theory of fractional powers of non-negative
operators are given in \cite{martinez}. Sectorial operators satisfy a
resolvent condition that leads to define the fractional power of such
operators. The functional calculus for sectorial operators has been developed
by M. Haase in the book \cite{haase}.

Apart from \cite{ash}, we do not know about any other work done on fractional
difference derivative in terms of spectral operator theory.

In this paper, we define the fractional difference as fractional power of the
nabla operator in a H\"{o}lder space. The H\"{o}lder spaces offer an
interesting point of view in the analysis of fractional integrals and
derivatives. This framework was developed by Samko et al. for fractional
operators in the sense of Marchaud \cite{samko93},\cite{samko94}. In this
functional framework we study the strong convergence of the extended backward
differences to the derivative. We construct the fractional operators
associated  and the strong convergence result is proved. Lastly, some examples
are provided to show the effectiveness of the approach.

This paper is organized as follows: The section 2 is devoted to preliminaries
and some H\"{o}lderians tools. Then, the operators in this context are
defined. In section 3, we give the basic definitions and results for the
fractional power of sectorial operator and we construct the different
fractional operators as fractional power of sectorial operator in H\"{o}lder
spaces setting. In section 4, we discuss the strong convergence of the
operators involved. Some examples are given in section 5.

\section{Preliminaries on operators in H\"{o}lder spaces}

Without loss of generality, we assume that the functions are defined on the
interval $\left[  0,1\right]  $. Let $\ H^{\beta}$ be the Banach space of
H\"{o}lderian function on $\left[  0,1\right]  $ with exponent $\beta$, where
$0<\beta<1$ and such that $f(0)=0,$ endowed with the norm $\left\Vert
f\right\Vert _{\beta}=\omega_{\beta}(f,1),$ where%

\[
\omega_{\beta}(f,\delta)=\underset{0<\left\vert s-t\right\vert \leq\delta
}{\underset{s,t\in\left[  0,1\right]  }{\sup}}\frac{\left\vert
f(t)-f(s)\right\vert }{\left\vert t-s\right\vert ^{\beta}}
\]

Let $H_{0}^{\beta}$ be the subspace defined by%

\[
H_{0}^{\beta}=\left\{  f\in H^{\beta},\underset{\delta\rightarrow0}{\lim
}\omega_{\beta}(f,\delta)=0\right\}
\]

\begin{remark}
If $f\in H_{0}^{\beta}$ then $\left\vert f(t)-f(t-h)\right\vert =o(h^{\beta})$
uniformly in $t,$ for $t=h$ we get $\left\vert f(h)\right\vert =o(h^{\beta}).$
\end{remark}

\begin{remark}
If $f\in H^{\beta}$ then $\left\Vert f\right\Vert _{\infty}\leq\left\Vert
f\right\Vert _{\beta}.$ Indeed, for every $x\in\left]  0,1\right]  ,$\\
$\left\vert f(x)\right\vert =\dfrac{\left\vert f(x)-f(0)\right\vert }%
{x^{\beta}}x^{\beta}\leq\left\Vert f\right\Vert _{\beta}.$
\end{remark}

\begin{remark}
If $f^{\prime}\in H^{\beta}$ then $\omega_{\beta}\left(  f,h\right)  \ $tends
to $0$ as $h\ $tends to $0.$

Indeed, for every $x$ $,y\in\left[  0,1\right]  $, $x\neq y$ there exists some
$\xi$ $\in$ $\left]  x,y\right[  $ such that%

\[
\dfrac{\left\vert f(x)-f(y)\right\vert }{\left\vert x-y\right\vert ^{\beta}%
}=\left\vert x-y\right\vert ^{1-\beta}\left\vert f^{\prime}(\xi)\right\vert
\leq\left\vert x-y\right\vert ^{1-\beta}\left\Vert f'\right\Vert _{\beta}%
\]

which leads to%

\begin{equation}
\omega_{\beta}\left(  f,h\right)  \leq h^{1-\beta}\left\Vert f^{\prime
}\right\Vert _{\beta}\label{majornormf2}%
\end{equation}

\end{remark}

The H\"{o}lder norm of the piecewise linear interpolation is given by the next
lemma. As far as we know, this result was first proved by H. E. White, Jr, in a
general setting see \cite[3.2 Corollary p 106]{white} but we follow
\cite{rackauska} in the presentation.

\begin{lemma}
[see lemma 3.1 \cite{rackauska}]Let $t_{0}=0<t_{1}<\cdots<t_{n}=1$ be a
partition of $\left[  0,1\right]  $ and $f$ be a real valued polygonal line
function on $\left[  0,1\right]  $ with vertices at $t_{i}$'$s,$ i.e. $f$ is
continuous on $\left[  0,1\right]  $ and its restriction to each interval
$\left[  t_{i},t_{i+1}\right]  $ is an affine function. Then for any
$0\leq\beta<1,$%

\[
\underset{0\leq s<t\leq1}{\sup}\frac{\left\vert f(t)-f(s)\right\vert }{\left(
t-s\right)  ^{\beta}}=\underset{0\leq i<j\leq1}{\max}\frac{\left\vert
f(t_{j})-f(t_{i})\right\vert }{\left(  t_{j}-t_{i}\right)  ^{\beta}}%
\]

\label{rac}
\end{lemma}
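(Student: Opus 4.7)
The plan is to establish the nontrivial inequality $\leq$ (the reverse direction is immediate, since the right-hand side takes the max over a subset of the pairs appearing in the left-hand sup). It suffices to show that for every $0 \leq s < t \leq 1$, the ratio $|f(t)-f(s)|/(t-s)^\beta$ is dominated by some vertex ratio $|f(t_j)-f(t_i)|/(t_j-t_i)^\beta$. I would carry this out in two symmetric steps: first push $t$ to a vertex while keeping $s$ fixed, then push $s$ to a vertex while keeping $t$ fixed.

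For the first step, fix $s$ and suppose $t \in [t_j, t_{j+1}]$. If $s$ lies in the same interval, the affinity of $f$ on $[s,t]$ reduces the ratio to $|\gamma_j|(t-s)^{1-\beta}$ (with $\gamma_j$ the slope on that piece), which grows with $t-s$ and is maximized by stretching $(s,t)$ out to the vertices of the interval. If $s < t_j$, I distinguish two sub-cases according to the signs of $f(t_j) - f(s)$ and $f(t_{j+1}) - f(s)$. When they agree, writing $t = \lambda t_j + (1-\lambda)t_{j+1}$ and combining the affinity of $f$ with the concavity of $u \mapsto u^\beta$ (which gives $(t-s)^\beta \geq \lambda(t_j-s)^\beta + (1-\lambda)(t_{j+1}-s)^\beta$) turns the ratio into a mediant of the two vertex ratios, hence at most the larger of them. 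When the signs differ, there is a $t^\ast \in (t_j, t_{j+1})$ with $f(t^\ast) = f(s)$; splitting at $t^\ast$, the mediant argument handles the left piece $[t_j, t^\ast]$ directly (since one of the numerators vanishes), while the right piece $[t^\ast, t_{j+1}]$ requires an auxiliary estimate.

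The main technical point is the inequality
\[
\frac{a}{b}(b+c)^\beta \leq (a+c)^\beta \qquad \text{for } 0 \leq a \leq b, \; c \geq 0,
\]
to be applied with $a = t - t^\ast$, $b = t_{j+1} - t^\ast$, $c = t^\ast - s$. This is where the interaction between the piecewise linearity of $f$ and the weight $(t-s)^\beta$ is most delicate. It follows by observing that $g(a) = (a+c)^\beta - (a/b)(b+c)^\beta$ satisfies $g''(a) = \beta(\beta-1)(a+c)^{\beta-2} \leq 0$, hence $g$ is concave in $a$, and $g(0) = c^\beta \geq 0$, $g(b) = 0$, so $g \geq 0$ on $[0,b]$ by concavity. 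A symmetric argument then pushes $s$ to a vertex once $t$ has been fixed at $t_j$, and combining the two reductions produces the desired pair of vertices dominating the given $(s,t)$.
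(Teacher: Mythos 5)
Your argument is correct. Note first that the paper itself gives no proof of this lemma: it is quoted from Ra\v{c}kauskas--Suquet (Lemma 3.1) with a pointer to White's earlier, more general result on concave moduli of continuity, so there is nothing in the source to compare against line by line; your proposal supplies a self-contained proof. The structure is sound: the inequality $\geq$ is trivial, and for $\leq$ your two-stage reduction (move $t$ to a vertex with $s$ fixed, then move $s$ with $t$ fixed at a vertex) covers all configurations. In the same-interval case the ratio is $|\gamma_j|(t-s)^{1-\beta}$ with $1-\beta>0$, so stretching to the endpoints is legitimate. In the equal-signs case the identity $f(t)-f(s)=\lambda\bigl(f(t_j)-f(s)\bigr)+(1-\lambda)\bigl(f(t_{j+1})-f(s)\bigr)$ together with $(t-s)^\beta\geq\lambda(t_j-s)^\beta+(1-\lambda)(t_{j+1}-s)^\beta$ (concavity) and the mediant inequality $\frac{\lambda A+(1-\lambda)B}{\lambda P+(1-\lambda)Q}\leq\max\bigl(\frac{A}{P},\frac{B}{Q}\bigr)$ does exactly what you claim. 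The only genuinely delicate point is the opposite-signs case, and your auxiliary inequality $\frac{a}{b}(b+c)^\beta\leq(a+c)^\beta$ for $0\leq a\leq b$, $c\geq 0$ is both correctly stated and correctly proved: $g(a)=(a+c)^\beta-\frac{a}{b}(b+c)^\beta$ is concave with $g(0)=c^\beta\geq 0$ and $g(b)=0$, hence nonnegative on $[0,b]$; applied with $a=t-t^{\ast}$, $b=t_{j+1}-t^{\ast}$, $c=t^{\ast}-s$ it gives precisely the domination of the ratio at $(s,t)$ by the ratio at $(s,t_{j+1})$, since $|f(t)-f(s)|=|\gamma_j|(t-t^{\ast})$ on that piece. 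The second stage is indeed symmetric (reflect the roles of $s$ and $t$), and composing the two reductions lands on a pair of vertices. Two cosmetic remarks: the displayed statement's index range should read $0\leq i<j\leq n$, not $\leq 1$; and when writing this up you should make explicit that in stage one the resulting dominating vertex $t_k$ satisfies $t_k>s$, so that stage two applies without degeneracy.
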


\begin{definition}
For $0<h<1$ fixed, let $\Delta_{h}$ be the subdivision of $\left[  0,1\right]  $ \ in $n$
subintervals with $n=\left[  1/h\right]  $ and $t_{k}=kh,$ for each
$k=0,1,\ldots,n$ where $\left[  a\right]  $ means the integer part of $a.$
We denote by $\mathcal{I}_{h}\in\mathcal{L(}%
H\mathcal{^{\beta})}$ the piecewise linear interpolation operator defined by%
\[
\left(  \mathcal{I}_{h}f\right)  (x):=\sum_{k=1}^{n}\left(  \frac{x-t_{k-1}}{h}f(t_{k})+\frac{t_{k}-x}{h}f(t_{k-1})\right)  \mathbb{1}_{\left[  t_{
k-1},t_{k}\right]  }(x)%
\]

\end{definition}

In the following lemma the remainder of piecewise linear interpolation is
expressed in H\"{o}lder norm.

\begin{lemma}
Let $(r_{h}f)(x)=\left(  I-\mathcal{I}_{h}\right)  f(x)$ then%

\[
\left\Vert (r_{h}f)\right\Vert _{\beta}\leq4\omega_{\beta}(f,h)\text{ }%
\]
\label{erreurinterpolation}
\end{lemma}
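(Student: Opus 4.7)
The plan is to combine a pointwise estimate on $r_h f$ with the Hölder regularity of both $f$ and $\mathcal{I}_h f$. Because $r_h f$ vanishes at every node $t_k$, I would start from the identity, valid for $x \in [t_{k-1}, t_k]$,
\[
(r_h f)(x) = \frac{x - t_{k-1}}{h}\bigl(f(x) - f(t_k)\bigr) + \frac{t_k - x}{h}\bigl(f(x) - f(t_{k-1})\bigr),
\]
which uses that the two interpolation weights sum to one. Since $|x - t_{k-1}|, |t_k - x| \leq h$, each difference $|f(x) - f(t_j)|$ is bounded by $h^\beta \omega_\beta(f,h)$, yielding the uniform bound $\|r_h f\|_\infty \leq 2 h^\beta \omega_\beta(f,h)$.

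Given $s < t$ in $[0,1]$, I would split the estimation of $|r_h f(t) - r_h f(s)|/(t-s)^\beta$ according to the size of $t - s$ relative to $h$. If $t - s > h$, the crude bound $|r_h f(t) - r_h f(s)| \leq 2\|r_h f\|_\infty \leq 4 h^\beta \omega_\beta(f,h)$ is already dominated by $4(t-s)^\beta \omega_\beta(f,h)$. If $t - s \leq h$, I would instead use the triangle inequality $|r_h f(t) - r_h f(s)| \leq |f(t) - f(s)| + |\mathcal{I}_h f(t) - \mathcal{I}_h f(s)|$. The first summand is at most $\omega_\beta(f,h)(t-s)^\beta$ directly from the definition of $\omega_\beta$. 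For the second, I would use that $\mathcal{I}_h f$ is piecewise linear with slope on each subinterval equal to $(f(t_k) - f(t_{k-1}))/h$, hence of magnitude at most $h^{\beta-1}\omega_\beta(f,h)$; telescoping through any node that might lie strictly between $s$ and $t$ gives $|\mathcal{I}_h f(t) - \mathcal{I}_h f(s)| \leq (t-s)\,h^{\beta-1}\omega_\beta(f,h) \leq (t-s)^\beta \omega_\beta(f,h)$, the last inequality using $t-s \leq h$.

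Taking the supremum over all pairs $(s,t)$ then yields $\|r_h f\|_\beta \leq 4\,\omega_\beta(f,h)$. The only mildly delicate point is the subcase where $t - s \leq h$ with $s$ and $t$ lying in two different subintervals: the slope estimate for $\mathcal{I}_h f$ must be combined across the interior node, which is legitimate because $\mathcal{I}_h f$ is continuous and satisfies the same slope bound on each piece, so the Lipschitz-type inequality survives the telescoping. Everything else reduces to straightforward bookkeeping of the four cases.
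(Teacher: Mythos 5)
Your proof is correct, but it follows a genuinely different route from the paper's. The paper splits according to the positions of the two points relative to the partition (same subinterval, adjacent subintervals, subintervals separated by at least one node), uses the algebraic identity $(r_hf)(x)-(r_hf)(y)=f(x)-f(y)-\tfrac{x-y}{h}\bigl(f(t_k)-f(t_{k-1})\bigr)$ within a single subinterval, and in the far-apart case exploits the vanishing of $r_hf$ at the nodes to kill the middle term of a three-fold triangle inequality. You instead split on whether $t-s$ exceeds $h$, and your two ingredients --- the uniform bound $\|r_hf\|_\infty\le 2h^\beta\omega_\beta(f,h)$ obtained from the convex-combination identity, and the global Lipschitz bound $h^{\beta-1}\omega_\beta(f,h)$ for the slope of $\mathcal{I}_hf$ --- appear nowhere in the paper's argument. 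Both proofs are elementary and yield the constant $4$; yours has the advantage of a cleaner two-case structure in which the node-crossing subtlety is absorbed once and for all into the Lipschitz statement for the interpolant, and the sup-norm bound is independently reusable. In fact, since the interpolation weights sum to one, your sup-norm estimate can be sharpened to $\|r_hf\|_\infty\le h^\beta\omega_\beta(f,h)$, which would improve your final constant to $2$; the paper's route, by contrast, stays entirely within Hölder quotients and leans on the same vanishing-at-nodes mechanism that underlies Lemma \ref{rac}, which the authors reuse repeatedly later. No gaps: the telescoping across an interior node that you flag as the delicate point is indeed legitimate, since $\mathcal{I}_hf$ is continuous and satisfies the same slope bound on every piece.
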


\begin{proof}
First let us suppose that $x,y\in\left]  t_{k-1},t_{k}\right]  $ then%

\[
(r_{h}f)(x)-(r_{h}f)(y)=f(x)-f(y)-\frac{x-y}{h}\left(  f(t_{k})-f(t_{k-1}%
)\right)
\]

It follows, from $\left\vert x-y\right\vert <h$ that%

\begin{align*}
\dfrac{\left\vert (r_{h}f)(x)-(r_{h}f)(y)\right\vert }{\left\vert
x-y\right\vert ^{\beta}}  & \leq\dfrac{\left\vert f(x)-f(y)\right\vert
}{\left\vert x-y\right\vert ^{\beta}}+\dfrac{\left\vert f(t_{k})-f(t_{k-1}%
)\right\vert }{h^{\beta}}\\
& \leq2\omega_{\beta}(f,h)
\end{align*}

Second, suppose that $x\in\left]  t_{k-1},t_{k}\right]  $ and $y\in\left]
t_{k},t_{k+1}\right]  $ then from the first case%

\begin{align*}
\dfrac{\left\vert (r_{h}f)(x)-(r_{h}f)(y)\right\vert }{\left\vert
x-y\right\vert ^{\beta}}  & \leq\dfrac{\left\vert (r_{h}f)(x)-(r_{h}%
f)(t_{k})\right\vert }{\left\vert x-t_{k}\right\vert ^{\beta}}+\dfrac
{\left\vert (r_{h}f)(t_{k})-(r_{h}f)(y)\right\vert }{\left\vert t_{k}%
-y\right\vert ^{\beta}}\\
& \leq4\omega_{\beta}(f,h)
\end{align*}

Third, suppose that $x\in\left]  t_{k-1},t_{k}\right]  $ and $y\in\left]
t_{m-1},t_{m}\right]  $ with $\left\vert x-y\right\vert >h$ then%

\begin{align*}
& \dfrac{\left\vert (r_{h}f)(x)-(r_{h}f)(y)\right\vert }{\left\vert
x-y\right\vert ^{\beta}}\\
& \leq\dfrac{\left\vert (r_{h}f)(x)-(r_{h}f)(t_{k})\right\vert }{\left\vert
x-y\right\vert ^{\beta}}+\dfrac{\left\vert (r_{h}f)(t_{k})-(r_{h}%
f)(t_{m-1})\right\vert }{\left\vert x-y\right\vert ^{\beta}}+\dfrac{\left\vert
(r_{h}f)(t_{m-1})-(r_{h}f)(y)\right\vert }{\left\vert x-y\right\vert ^{\beta}}%
\end{align*}
Knowing that $(r_{h}f)(t_{k})=(r_{h}f)(t_{m-1})=0$ then%
\[
\dfrac{\left\vert (r_{h}f)(x)-(r_{h}f)(y)\right\vert }{\left\vert
x-y\right\vert ^{\beta}}\leq4\omega_{\beta}(f,h)
\]

\end{proof}

We denote by $A=\dfrac{d}{dx}$ the differential operator acting on $H^{\beta}$
with domain,%

\[
D(A)=\left\{  f\in H^{\beta},f^{\prime}\in H^{\beta}\right\}  ,
\]

and for $\ 0<h<1,$ let $\nabla_{h}\in\mathcal{L(}H\mathcal{^{\beta})}$ the
nabla operator defined by%

\[
\left(  \nabla_{h}f\right)  (x):=\dfrac{f(x)-f(x-h)}{h},\qquad\text{for }%
x\in\left[  h,1\right]
\]

We set $\left(  \nabla_{h}f\right)  (x)=\dfrac{f(x)}{h}$ for all $0<x<h.$

Lastly, we introduce the extended nabla operator as the polygonal line with vertices
$(t_{k},\nabla f(t_{k})),$ $k=0,1,\ldots,n,$ in the following definition.

\begin{definition}
We define the operator $A_{h}$ for any $f\in H^{\beta}$ by%

\[
A_{h}f(x)=\left(  \mathcal{I}_{h}\nabla_{h}f\right)  (x)
\]

\end{definition}

Obviously $A_{h}$ is a linear bounded operator with $\left\Vert A_{h}%
\right\Vert _{\beta}\leq\dfrac{2}{h}.$

In the next proposition, it can be pointed out that the sequence $\left(  A_{h}\right)
_{h}$ has no uniform limit as $h$ tends to $0$.

\begin{proposition}
The sequence $\left(  A_{h}\right)  _{h}$ of extended nabla operator is not a
convergent sequence in $\mathcal{L}(H^{\beta})  $ as $h$ tends to $0.$
\end{proposition}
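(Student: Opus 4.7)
The plan is to show that $\|A_h\|_{\mathcal{L}(H^\beta)}$ blows up as $h\to 0$; since any convergent sequence in a normed space is bounded, this immediately rules out convergence. So I only need to exhibit one test function in the unit ball of $H^\beta$ whose image under $A_h$ has H\"older norm tending to infinity.

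The natural choice is the canonical ``extremal'' H\"older function $f_0(x)=x^\beta$. It lies in $H^\beta$ because $f_0(0)=0$, and the subadditivity inequality $|t^\beta-s^\beta|\le |t-s|^\beta$ (with equality at $s=0$) shows that $\|f_0\|_\beta=1$. For this function I would compute the values of $\nabla_h f_0$ at the first two grid points: $\nabla_h f_0(0)=f_0(0)/h=0$, while $\nabla_h f_0(h)=\bigl(f_0(h)-f_0(0)\bigr)/h=h^{\beta-1}$. This is the key point: the backward difference at the very first node is of order $h^{\beta-1}$, which is large compared to $h^\beta$.

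Since $A_h f_0=\mathcal{I}_h\nabla_h f_0$ is the polygonal line interpolating $\nabla_h f_0$ at the nodes $t_k=kh$, Lemma~\ref{rac} applies. Taking the pair of indices $i=0,\ j=1$ in the maximum, I get
\[
\|A_h f_0\|_\beta \;\ge\; \frac{|A_hf_0(t_1)-A_hf_0(t_0)|}{(t_1-t_0)^\beta}\;=\;\frac{h^{\beta-1}}{h^\beta}\;=\;\frac{1}{h}.
\]
Since $\|f_0\|_\beta=1$, this gives the operator-norm lower bound $\|A_h\|_{\mathcal{L}(H^\beta)}\ge 1/h$, which diverges as $h\to 0$. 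A Cauchy/convergent sequence in $\mathcal{L}(H^\beta)$ would have to be bounded, so $(A_h)_h$ cannot converge.

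I do not expect any real obstacle here: the only step that requires a bit of thought is guessing the test function, and $x^\beta$ is the standard extremizer for the H\"older seminorm. Everything else is an immediate consequence of the definitions and Lemma~\ref{rac}.
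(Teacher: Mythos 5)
Your argument is correct, and it takes a genuinely different (though closely related) route from the paper. The paper also uses the test function $f(x)=x^{\beta}$, but it proves the failure of the Cauchy property: it compares two consecutive operators, evaluating $\left(A_{h}-A_{h/2}\right)f$ at the single point $x=h/2$, where the linear interpolant of $\nabla_{h}f$ and the value of $\nabla_{h/2}f$ disagree by $h^{\beta-1}\left\vert \tfrac{1}{2}-(\tfrac{1}{2})^{\beta-1}\right\vert \geq (\tfrac{1}{2})^{\beta-1}-\tfrac{1}{2}>0$ uniformly in $h$ (using $\Vert g\Vert_{\beta}\geq\Vert g\Vert_{\infty}$), so $\Vert A_{h}-A_{h/2}\Vert$ does not tend to $0$. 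You instead show that $\Vert A_{h}\Vert_{\mathcal{L}(H^{\beta})}\geq 1/h$ by testing the H\"{o}lder quotient of $A_{h}f_{0}$ at the node pair $(t_{0},t_{1})$, and conclude from unboundedness. Your computation is sound: $\nabla_{h}f_{0}(0)=0$, $\nabla_{h}f_{0}(h)=h^{\beta-1}$, $\mathcal{I}_{h}$ interpolates at the nodes, and $\Vert f_{0}\Vert_{\beta}=1$ by subadditivity of $x\mapsto x^{\beta}$. Two small remarks: you do not actually need Lemma~\ref{rac} here, since a supremum is trivially bounded below by its value at one particular pair of points (the lemma is only needed for the reverse direction); and your bound is in fact sharper information than the paper extracts, since together with the paper's observation $\Vert A_{h}\Vert_{\beta}\leq 2/h$ it pins down the exact blow-up rate $\Vert A_{h}\Vert\asymp 1/h$. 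Both proofs are complete; yours is arguably the more economical one.
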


\begin{proof}
We need only to proof that $\left(  A_{h}\right)  _{h}$ is not a Cauchy sequence.

Let $\Delta_{h}$ $,\Delta_{h/2}$ be two subdivisions of $\left[  0,1\right]  $
and $f(x)=x^{\beta}.$ We then have for $t=h/2$%

\begin{align*}
\left\vert \left(  A_{h}-A_{h/2}\right)  f\left(  t\right)  \right\vert  &
=\left\vert \frac{h/2}{h}\frac{f(h)}{h}-\frac{f(h/2)}{h/2}\right\vert \\
& =\left\vert \frac{h^{\beta-1}}{2}-\left(  \frac{h}{2}\right)  ^{\beta
-1}\right\vert \geq\left\vert \frac{1}{2}-\left(  \frac{1}{2}\right)
^{\beta-1}\right\vert
\end{align*}

Since%
\[
\left\Vert \left(  A_{h}-A_{2h}\right)  f\right\Vert _{\beta}\geq\left\vert
\left(  A_{h}-A_{2h}\right)  f\left(  h/2\right)  \right\vert
\]

then%

\[\left\Vert A_{h}-A_{2h}\right\Vert _{\beta}\left\Vert f\right\Vert _{\beta
}\geq
\left\Vert \left(  A_{h}-A_{2h}\right)  f\right\Vert _{\beta}\geq\left(
\frac{1}{2}\right)  ^{\beta-1}-\frac{1}{2}%
\]

We have thus seen that $\left(  A_{h}\right)  _{h}$ is not a convergent
sequence in $\mathcal{L}(H^{\beta})  $.
\end{proof}

In the next proposition the strong convergence of extended nabla operator to
the derivative operator is proved.

\begin{proposition}
For every $f$ $\in D(A)$, such that $f^{\prime}\in H_{0}^{\beta}$ \ the
sequence $(A_{h})_{h}$ converges strongly to $A$ as $h$ tends to $0.$
\end{proposition}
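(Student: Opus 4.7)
The plan is to split the error $A_h f - A f = \mathcal{I}_h \nabla_h f - f'$ by inserting $\mathcal{I}_h f'$:
\[
A_h f - A f = \mathcal{I}_h(\nabla_h f - f') - r_h f',
\]
and then to control each term separately in the H\"olderian norm.

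For the interpolation remainder $r_h f'$, I would invoke Lemma \ref{erreurinterpolation} directly, which yields $\|r_h f'\|_\beta \leq 4\,\omega_\beta(f',h)$. Since $f' \in H_0^\beta$ by hypothesis, the right-hand side tends to $0$ as $h \to 0$.

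For the piecewise linear piece $\mathcal{I}_h(\nabla_h f - f')$, the idea is to apply Lemma \ref{rac} so that its $\beta$-H\"older seminorm equals the maximum of the ratios $|g(t_j)-g(t_i)|/(t_j-t_i)^\beta$ taken only over the vertices, where $g := \nabla_h f - f'$. At an interior vertex $t_k$ with $k \geq 1$, the mean value theorem (applicable because $f \in D(A)$) gives some $\xi_k \in (t_{k-1},t_k)$ with $\nabla_h f(t_k) = f'(\xi_k)$, hence
\[
|g(t_k)| = |f'(\xi_k) - f'(t_k)| \leq \omega_\beta(f',h)\,h^\beta,
\]
because $|\xi_k - t_k| \leq h$. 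At the boundary vertex $t_0 = 0$, the convention $\nabla_h f(0) = f(0)/h = 0$ combined with $f'(0) = 0$ (forced by $f' \in H^\beta$) gives $g(0) = 0$, so the same bound holds. Finally, since $j > i$ forces $t_j - t_i \geq h$, for every pair of vertices
\[
\frac{|g(t_j) - g(t_i)|}{(t_j - t_i)^\beta} \leq \frac{2\,\omega_\beta(f',h)\,h^\beta}{h^\beta} = 2\,\omega_\beta(f',h),
\]
and Lemma \ref{rac} delivers $\|\mathcal{I}_h(\nabla_h f - f')\|_\beta \leq 2\,\omega_\beta(f',h) \to 0$. Adding both estimates yields $\|A_h f - A f\|_\beta \leq 6\,\omega_\beta(f',h) \to 0$.

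The main subtlety is the boundary contribution at $t_0 = 0$: the definition of $\nabla_h$ is piecewise on $(0,h)$ and $\mathcal{I}_h$ only samples at nodes, so one must verify that the apparent mismatch between $\nabla_h f(0)$ and $f'(0)$ does not spoil the estimate. This is precisely where the normalization $f(0) = 0$ built into $H^\beta$ (inherited by $f'$ through $f' \in H^\beta$) is essential; without $f'(0) = 0$ a constant-order term would survive and prevent strong convergence.
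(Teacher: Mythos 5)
Your proof is correct and follows essentially the same route as the paper's: the same decomposition into the interpolation remainder $r_h f'$ plus the interpolated difference $\mathcal{I}_h(\nabla_h f - f')$, the same appeal to Lemmas \ref{erreurinterpolation} and \ref{rac}, and the same mean-value-theorem estimate at the vertices, arriving at the same bound $6\,\omega_\beta(f',h)$. Your extra care at the boundary vertex $t_0=0$ (using $f'(0)=0$, which is built into the normalization of $H^{\beta}$) addresses a point the paper passes over silently, but it does not change the argument.
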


\begin{proof}
Note that \
\[
\left(  A-A_{h}\right)  f(x)=\left(  I-\mathcal{I}_{h}\right)
Af(x)+\mathcal{I}_{h}\left(  A-\nabla_{h}\right)  f(x)=\left(  r_{h}f^{\prime
}\right)  (x)+\mathcal{I}_{h}\left(  A-\nabla_{h}\right)  f(x).
\]
Then%

\[
\dfrac{\left\vert \left(  A-A_{h}\right)  f(x)-\left(  A-A_{h}\right)
f(y)\right\vert }{\left\vert x-y\right\vert ^{\beta}}\leq\left\Vert \left(
r_{h}f^{\prime}\right)  \right\Vert _{\beta}+\dfrac{\left\vert \mathcal{I}%
_{h}\left(  A-\nabla_{h}\right)  f(x)-\mathcal{I}_{h}\left(  A-\nabla
_{h}\right)  f(y)\right\vert }{\left\vert x-y\right\vert ^{\beta}}%
\]
From lemma \ref{erreurinterpolation}%
\[
\left\Vert \left(  r_{h}f^{\prime}\right)  \right\Vert _{\beta}\leq
4\omega_{\beta}\left(  f^{\prime},h\right)
\]

For some $i,j , \xi_{i}\in\left]  t_{i-1},t_{i}\right[  $ and $\xi_{j}\in\left]
t_{j-1},t_{j}\right[  $ by lemma \ref{rac} we have%

\begin{align*}
\dfrac{\left\vert \mathcal{I}_{h}\left(  A-\nabla_{h}\right)  f(x)-\mathcal{I}%
_{h}\left(  A-\nabla_{h}\right)  f(y)\right\vert }{\left\vert x-y\right\vert
^{\beta}}  & \leq\dfrac{\left\vert f^{\prime}(t_{i})-\nabla_{h}f(t_{i}%
)\right\vert }{\left\vert t_{i}-t_{j}\right\vert ^{\beta}}+\dfrac{\left\vert
f^{\prime}(t_{j})-\nabla_{h}f(t_{j})\right\vert }{\left\vert t_{i}%
-t_{j}\right\vert ^{\beta}}\\
& \leq\dfrac{\left\vert f^{\prime}(t_{i})-f^{\prime}(\xi_{i})\right\vert
}{\left\vert t_{i}-t_{j}\right\vert ^{\beta}}+\dfrac{\left\vert f^{\prime
}(t_{j})-f^{\prime}(\xi_{j})\right\vert }{\left\vert t_{i}-t_{j}\right\vert
^{\beta}},\\
& \leq2\omega_{\beta}\left(  f^{\prime},h\right)
\end{align*}

Hence%

\begin{equation}
\dfrac{\left\vert \left(  A-A_{h}\right)  f(x)-\left(  A-A_{h}\right)
f(y)\right\vert }{\left\vert x-y\right\vert ^{\beta}}\leq6\omega_{\beta
}\left(  f^{\prime},h\right) \label{convergenceaalpha}%
\end{equation}

Therefore $\underset{h\rightarrow0}{\lim}\left\Vert Af-A_{h}f\right\Vert
_{\beta}=0.$
\end{proof}

\section{Fractional power of sectorial operator}

\subsection{Sectorial property}

We first, recall the Haase concept of sectorial operators \cite[Section 2.1
,p19]{haase}.
In the following $R(\lambda,B)=\left(  \lambda I-B\right)  ^{-1},\rho(B)$ and
$\sigma(B)=%
\mathbb{C}
\backslash\rho(B)$ denote respectively the resolvent, the resolvent set and
the spectrum of a linear operator $B$ on a Banach space $Z.$
Let $S_{\omega}$ denote the open sector \[\left\{  z\in\mathbb{C}
,z\neq0\text{ and }\left\vert \arg z\right\vert <\omega\right\}  ,0<\omega
\leq\pi.\]
\begin{definition}
The operator $B$ is sectorial of angle $\omega<\pi$ (in short: $B\in
Sect(\omega)$) if:

1) $\sigma(B)\subset\overline{S_{\omega}}$ \ and

2) $M(B,\omega^{\prime}):=\sup\left\{  \left\Vert \lambda R(\lambda
,B)\right\Vert ,\lambda\notin\overline{S_{\omega^{\prime}}}\right\}  <\infty$
for all $\omega<\omega^{\prime}<\pi.$

A family of operators $\left(  B_{\iota}\right)  _{\iota}$ is uniformly
sectorial of angle $\omega$ if $B_{\iota}\in Sect(\omega)$ for each $\iota$
and $\sup_{\iota}M(B_{\iota},\omega^{\prime})<\infty$ for all $\omega
<\omega^{\prime}<\pi.$
\end{definition}

\begin{remark}
Sectorial operator in Haase definition don't have to be densely defined see
\cite[Definition 3.8, p 97]{ito}.
\end{remark}

We are now able to define the fractional power of a sectorial operator
with help of the Balakrishnan representation (see \cite[Proposition
3.1.12]{haase})

\begin{proposition}
Let $B$ an operator with domain $\mathcal{D}(B),B\in Sect(\omega),$ and let
$0<\alpha<1.$ Then for all $f\in\mathcal{D}(B)$
\end{proposition}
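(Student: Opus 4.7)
The plan is to identify the abstract sectorial functional calculus definition of $B^{\alpha}$ with an integral along the negative real axis, by deforming a Dunford--Riesz contour onto the branch cut of $z\mapsto z^{\alpha}$.

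First I would set up the functional calculus. Fix $\omega'\in(\omega,\pi)$ and let $\Gamma_{\omega'}$ be the boundary of $S_{\omega'}$, oriented so that $\sigma(B)\subset\overline{S_{\omega}}$ lies on the left. By the sectoriality hypothesis, $R(z,B)$ is holomorphic on $\mathbb{C}\setminus\overline{S_{\omega}}$ and satisfies $\|R(z,B)\|\leq M(B,\omega')/|z|$ outside $\overline{S_{\omega'}}$. The principal branch $z\mapsto z^{\alpha}$, with cut $(-\infty,0]$, is holomorphic on a neighborhood of $\overline{S_{\omega'}}$. The formal integral $\tfrac{1}{2\pi i}\int_{\Gamma_{\omega'}}z^{\alpha}R(z,B)f\,dz$ fails to converge absolutely because its integrand is only $O(|z|^{\alpha-1})$ at infinity. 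To remedy this I would use, for $f\in\mathcal{D}(B)$, the resolvent identity
\[
R(z,B)f=\frac{1}{z}f+\frac{1}{z}R(z,B)Bf,
\]
which splits the integrand as $z^{\alpha-1}f+z^{\alpha-1}R(z,B)Bf$. The first piece integrates to zero (closing $\Gamma_{\omega'}$ at infinity and using $\alpha<1$), while the second is $O(|z|^{\alpha-2})$ on $\Gamma_{\omega'}$ and hence absolutely integrable.

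Next I would deform $\Gamma_{\omega'}$ onto the branch cut by letting $\omega'\nearrow\pi$. The $O(|z|^{\alpha-2})$ decay at infinity and the boundedness of $R(z,B)Bf$ near the origin ensure that the circular arcs vanish in the limit. Only the two rays $z=te^{\pm i\pi}$, $t>0$, survive. The jump of $z^{\alpha}$ across the cut, namely $e^{i\alpha\pi}-e^{-i\alpha\pi}=2i\sin(\alpha\pi)$, together with $R(-t,B)=-(tI+B)^{-1}$, yields after a change of variable the Balakrishnan-type representation
\[
B^{\alpha}f=\frac{\sin(\alpha\pi)}{\pi}\int_{0}^{\infty}t^{\alpha-1}(tI+B)^{-1}Bf\,dt.
\]

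The main technical obstacle is the rigorous justification of the contour deformation: showing that the arcs at $0$ and at $\infty$ give no contribution in the limit, and that the auxiliary piece $\int_{\Gamma_{\omega'}}z^{\alpha-1}f\,dz$ vanishes. Both facts rely essentially on the hypothesis $0<\alpha<1$: decay at infinity and absolute convergence require $\alpha<1$, whereas integrability near the origin and the vanishing of the small arc require $\alpha>0$. Once these estimates are in place, the absolute convergence of the resulting improper integral follows from the sectorial estimate $\|(tI+B)^{-1}\|\leq M/t$ at infinity and from $\|(tI+B)^{-1}Bf\|\leq(1+M)\|f\|$ near zero.
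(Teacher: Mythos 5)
The paper does not actually prove this proposition: it quotes the Balakrishnan representation verbatim from Haase \cite[Proposition 3.1.12]{haase}, so you are supplying an argument where the authors supply a citation. Your overall strategy --- use the resolvent identity $R(z,B)f=z^{-1}f+z^{-1}R(z,B)Bf$ to gain decay, deform the sector boundary $\Gamma_{\omega'}$ onto the branch cut as $\omega'\nearrow\pi$, and read off the jump $e^{i\alpha\pi}-e^{-i\alpha\pi}=2i\sin(\alpha\pi)$ --- is the standard route to this formula, and your decay bookkeeping ($O(|z|^{\alpha-2})$ at infinity, $\left\Vert (t+B)^{-1}Bf\right\Vert \leq (1+M)\left\Vert f\right\Vert$ near the origin) and the final jump computation are correct.

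There is, however, one genuine gap. You claim the auxiliary piece $\frac{1}{2\pi i}\int_{\Gamma_{\omega'}}z^{\alpha-1}\,dz\cdot f$ ``integrates to zero.'' It does not: on the truncated contour $\varepsilon\leq |z|\leq R$ the two rays $z=re^{\pm i\omega'}$ combine to give $-\frac{\sin(\alpha\omega')}{\pi\alpha}\left(R^{\alpha}-\varepsilon^{\alpha}\right)$, which diverges as $R\to\infty$ because $\alpha>0$ and $\sin(\alpha\omega')>0$; equivalently, the closing arc at radius $R$ grows like $R^{\alpha}$ rather than vanishing. So you cannot split $z^{\alpha}R(z,B)f$ into two pieces and integrate each over the full contour. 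This is tied to a foundational omission: since $z^{\alpha}$ does not decay at infinity, the formal Cauchy integral $\frac{1}{2\pi i}\int_{\Gamma_{\omega'}}z^{\alpha}R(z,B)f\,dz$ is not the definition of $B^{\alpha}f$ in the sectorial calculus; the definition proceeds by regularization, e.g. $B^{\alpha}f=(1+B)\bigl[\tfrac{z^{\alpha}}{1+z}\bigr](B)f$, and you never say which definition you are matching the Balakrishnan integral against. The repair is either to deform the absolutely convergent integral of $\tfrac{z^{\alpha}}{1+z}R(z,B)$ onto the cut, keeping track of the residue at $z=-1$, or to show directly that the absolutely convergent expression $\frac{1}{2\pi i}\int_{\Gamma_{\omega'}}z^{\alpha-1}R(z,B)Bf\,dz$ agrees with the regularized definition of $B^{\alpha}f$ and deform only that integral; with either fix the remainder of your argument goes through.
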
%

\begin{equation}
B^{\alpha}f(x)=-\frac{\sin\alpha\pi}{\pi}\int_{0}^{\infty}\lambda^{\alpha
-1}R(-\lambda,B)Bf(x)d\lambda\label{balakrishnan}%
\end{equation}

\subsection{Fractional power of the derivative}

We define the fractional derivative as fractional power of sectorial operator
in H\"{o}lder space. To do so, we examine sectoriality of $A.$

\begin{proposition}
The operator $A$ on $H^{\beta}$ is sectorial of angle $\dfrac{\pi}{2}.$
\end{proposition}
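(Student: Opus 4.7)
The definition requires two properties: (a) $\sigma(A) \subset \overline{S_{\pi/2}}$ (so every $\lambda$ with $\operatorname{Re}(\lambda) < 0$ must lie in $\rho(A)$), and (b) for every $\omega' \in (\pi/2, \pi)$, $M(A, \omega') := \sup\{\|\lambda R(\lambda, A)\|_\beta : \lambda \notin \overline{S_{\omega'}}\} < \infty$. The plan is to build the resolvent explicitly by variation of constants and then estimate it directly in the H\"older norm.

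For (a), fix $\lambda$ with $\operatorname{Re}(\lambda) < 0$ and solve the resolvent equation $\lambda f - f' = g$, $f(0) = 0$, obtaining
\[
R(\lambda, A) g(x) = -\int_0^x e^{\lambda(x-s)} g(s)\, ds.
\]
Uniqueness is immediate because any homogeneous solution has the form $Ce^{\lambda x}$ and $f(0) = 0$ forces $C = 0$. The H\"older estimate obtained in (b) will show $f \in H^\beta$, whence $f' = \lambda f - g \in H^\beta$ and $f \in D(A)$; hence $\lambda \in \rho(A)$.

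For (b), fix $\omega' \in (\pi/2, \pi)$; then $\lambda \notin \overline{S_{\omega'}}$ gives $-\operatorname{Re}(\lambda) \geq c\,|\lambda|$ with $c = -\cos\omega' > 0$. After the substitution $v = x - s$, for $0 \leq s < t \leq 1$ I would decompose
\[
\lambda f(t) - \lambda f(s) = -\int_0^s \lambda e^{\lambda v}\bigl(g(t-v) - g(s-v)\bigr) dv \;-\; \int_s^t \lambda e^{\lambda v} g(t-v)\, dv.
\]
The first piece uses $|g(t-v) - g(s-v)| \leq \|g\|_\beta (t-s)^\beta$ together with $|\lambda| \int_0^s e^{\operatorname{Re}(\lambda) v} dv \leq |\lambda|/|\operatorname{Re}(\lambda)| \leq 1/c$. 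For the second, the boundary condition $g(0) = 0$ gives $|g(t-v)| \leq \|g\|_\beta (t-v)^\beta \leq \|g\|_\beta (t-s)^\beta$ on $[s,t]$, and the same exponential bound $|\lambda|\int_s^t e^{\operatorname{Re}(\lambda) v} dv \leq 1/c$ applies. Dividing by $(t-s)^\beta$ and summing produces $\|\lambda R(\lambda, A) g\|_\beta \leq (2/c)\|g\|_\beta$, so $M(A,\omega') \leq 2/c < \infty$.

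The main obstacle is the second integral: the naive bound $\int_s^t |\lambda| e^{\operatorname{Re}(\lambda) v}(t-v)^\beta dv \leq |\lambda|(t-s)^{\beta+1}/(\beta+1)$ fails to scale like $(t-s)^\beta$ uniformly in $\lambda$. The key is to apply $(t-v)^\beta \leq (t-s)^\beta$ first, pulling the H\"older factor out of the integral, and then trade the factor $|\lambda|$ against the exponential via $|\lambda|\int e^{\operatorname{Re}(\lambda) v} dv \leq 1/c$ rather than against $(t-s)$. Modulo this trick, the estimate is routine and yields $A \in Sect(\pi/2)$.
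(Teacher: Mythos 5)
Your proposal is correct and follows the same skeleton as the paper's proof: the same explicit resolvent formula $R(\lambda,A)g(x)=-\int_0^x e^{\lambda(x-t)}g(t)\,dt$ and the same two-piece splitting of $R(\lambda,A)g(t)-R(\lambda,A)g(s)$ into an increment part over $[0,s]$ and a boundary part over $[s,t]$ (where the hypothesis $g(0)=0$ built into $H^{\beta}$ is what makes the second piece scale like $(t-s)^{\beta}$). The only real divergence is in how the two integrals are estimated: the paper applies H\"older's inequality with conjugate exponents $p,q$ and then computes $\inf_{p>1}\bigl(-1/(p\operatorname{Re}\lambda)\bigr)^{1/p}$ case by case to recover the bound $-1/\cos\omega$, whereas you bound the function factor in sup norm and trade $|\lambda|$ against $\int e^{\operatorname{Re}(\lambda)v}\,dv\le 1/(-\operatorname{Re}\lambda)$ directly, using $-\operatorname{Re}\lambda\ge -|\lambda|\cos\omega'$. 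Your route reaches the same constant $-2/\cos\omega'$ with strictly less machinery, so it is a genuine simplification of the estimation step rather than a different proof; you also make explicit the point, left implicit in the paper, that the estimate together with $f'=\lambda f-g$ shows $\lambda\in\rho(A)$ and hence gives the spectral inclusion $\sigma(A)\subset\overline{S_{\pi/2}}$.
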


\begin{proof}
For all $\lambda\in\mathbb{C},$ the resolvent of the operator $A$ on $H^{\beta}$ is given by%

\[
R(\lambda,A)f(x)=-\int_{0}^{x}e^{\lambda\left(  x-t\right)  }f(t)dt
\]

Let's take $\lambda\in \mathbb{C}$ with $\operatorname{Re}(\lambda)<0$ and let $x,h$ be such that $0\leq x-h<x\leq1.$
We have
\begin{align*}
&\left\vert R(\lambda ,A)f(x)-R(\lambda ,A)f(x-h)\right\vert\\&=\left\vert
\int_{0}^{x-h}-e^{\lambda t}\left[ f\left( x-t\right) -f\left( x-h-t\right) 
\right] dt+\int_{x-h}^{x}-e^{\lambda t}f\left( x-t\right) dt\right\vert 
\end{align*}
If $p$ and $q$ are two real positive conjugates, the H\"{o}lder inequality implies:
\begin{align*}
& \left\vert \lambda\right\vert \left\vert R(\lambda,A)f(x)-R(\lambda
,A)f(x-h)\right\vert \\
& \leq\left\vert \lambda\right\vert \left(  \int_{0}^{x-h}%
e^{p\operatorname{Re}(\lambda)t}dt\right)  ^{\frac{1}{p}}\left(  \int
_{0}^{x-h}\left\vert f(x-t)-f(x-h-t)\right\vert ^{q}dt\right)  ^{\frac{1}{q}}\\%
&+\left\vert \lambda\right\vert\left(  \int_{x-h}^{x}e^{p\operatorname{Re}(\lambda)t}dt\right)  ^{\frac
{1}{p}}\left(  \int_{x-h}^{x}\left\vert f(x-t)\right\vert ^{q}\right)
^{\frac{1}{q}}dt\\
& \leq\underset{0\leq t\leq x-h}{\sup}\left\vert f(t)-f(t-h)\right\vert (x-h)^{\frac{1}{q}}
\left\vert \lambda\right\vert \left(  \frac{e^{p\operatorname{Re}%
(\lambda)\left(  x-h\right)  }-1}{p\operatorname{Re}(\lambda)}\right)
^{\frac{1}{p}}\\&+\underset{x-h\leq t\leq x}{\sup}\left\vert f(x-t)\right\vert
\left\vert (h)^{\frac{1}{q}}\lambda\right\vert \left(  \frac{e^{p\operatorname{Re}(\lambda
)x}-e^{p\operatorname{Re}(\lambda)\left(  x-h\right)  }}{p\operatorname{Re}%
(\lambda)}\right)  ^{\frac{1}{p}}\\
& \leq\omega_{\beta}\left(  f,h\right)  h^{\beta}2\left\vert \lambda
\right\vert \left(  \frac{-1}{p\operatorname{Re}(\lambda)}\right)  ^{\frac
{1}{p}}%
\end{align*}

Knowing that for every parameter $\operatorname{Re}(\lambda)<0$ the infimum, on $]1,\infty[$, of function 

\[
\left(  \frac{-1}{p\operatorname{Re}(\lambda)}\right)  ^{\frac
{1}{p}}
\]
is given by 
\[
 \underset{p>1}{\inf}\left(  \frac{-1}%
{p\operatorname{Re}(\lambda)}\right)  ^{1/p}=\left\{
\begin{array}
[c]{c}%
-\frac{1}{\operatorname{Re}(\lambda
)}  \text{ \ \ \ if }-\frac{e}{\operatorname{Re}(\lambda)}\leq1\\
 e^{\frac{1}{e}\operatorname{Re}(\lambda)}\text{
\ \ \ \ \ \  if }-\frac{e}{\operatorname{Re}(\lambda)}>1
\end{array}
\right.
\]

Consequently, by $\operatorname{Re}(\lambda)=\left\vert \lambda
\right\vert \cos\omega$ \ we have
\[
\left\vert \lambda\right\vert \left(  -\frac{1}{\operatorname{Re}(\lambda
)}\right)  =\left\vert \lambda\right\vert \left(  -\frac{1}{\left\vert
\lambda\right\vert \cos\omega}\right)  =-\frac{1}{\cos\omega}%
\]

and
\[
\left\vert \lambda\right\vert e^{\frac{1}{e}\operatorname{Re}(\lambda)}\text{
}=\left\vert \lambda\right\vert e^{\frac{1}{e}\left\vert \lambda\right\vert
\cos\omega}.
\]

In addition, the function defined on $\left[  0,\infty\right[  $ by
$xe^{x\frac{1}{e}\cos\omega}$ admits for maximum value $-\frac{1}%
{\cos\omega}.$ which gives the estimate
\[
\left\vert \lambda\right\vert \underset{p>1}{\inf}\left(  \frac{-1}%
{p\operatorname{Re}(\lambda)}\right)  ^{1/p}\leq-\frac{1}{\cos\omega}.
\]

According to the previous arguments we get
\[
\left\vert \lambda\right\vert \left\vert R(\lambda,A)f(x)-R(\lambda
,A)f(x-h)\right\vert \leq-\frac{2}{\cos\omega}\omega_{\beta}\left(
f,h\right)  h^{\beta}%
\]
which implies
\[
\left\vert \lambda\right\vert \omega_{\beta}\left(  R(\lambda,A)f,h\right)
\leq-\frac{2}{\cos\omega}\omega_{\beta}\left(  f,h\right)  h^{\beta}%
\]

and
\[
\left\vert \lambda\right\vert \left\Vert R(\lambda,A)f\right\Vert _{\beta}%
\leq-\frac{2}{\cos\omega}\left\Vert f\right\Vert _{\beta}%
\]

Therefore, for every $\lambda\in \mathbb{C}\backslash%
\overline{S_{\omega}},\dfrac{\pi}{2}<\omega\leq\pi,$%
\[
\left\Vert \lambda R(\lambda,A)\right\Vert \leq -\frac{2}{\cos\omega}
\]
\end{proof}

\begin{corollary}
Let $0<\alpha<1$ and $f\in D(A).$ Then%
\[
A^{\alpha}f(x)=\frac{1}{\Gamma(1-\alpha)}\int_{0}^{x}\left(  x-t\right)
^{-\alpha}f^{\prime}(t)dt
\]

\end{corollary}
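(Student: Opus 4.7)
The plan is to specialize the Balakrishnan representation $(\ref{balakrishnan})$ to $B = A$ and reduce the resulting double integral to the claimed Marchaud-type expression via Fubini's theorem and the Gamma reflection identity. From the previous proposition $A \in Sect(\pi/2)$, so $(\ref{balakrishnan})$ is applicable for every $0 < \alpha < 1$ and $f \in D(A)$; moreover, the proof of that proposition exhibits the explicit form $R(\lambda, A) g(x) = -\int_0^x e^{\lambda(x-t)} g(t)\,dt$, whence
\[
R(-\lambda, A) A f(x) \;=\; -\int_0^x e^{-\lambda(x-t)} f'(t)\,dt.
\]

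Substituting this into $(\ref{balakrishnan})$ and exchanging the order of integration, the expression becomes
\[
A^\alpha f(x) \;=\; \frac{\sin \alpha\pi}{\pi} \int_0^x f'(t) \left(\int_0^\infty \lambda^{\alpha-1} e^{-\lambda(x-t)}\,d\lambda \right) dt.
\]
The inner integral in $\lambda$ is the standard Gamma integral $\Gamma(\alpha)(x-t)^{-\alpha}$ (via the substitution $u = \lambda(x-t)$), and the Euler reflection formula $\Gamma(\alpha)\Gamma(1-\alpha) = \pi / \sin\alpha\pi$ collapses the prefactor $\sin(\alpha\pi)\, \Gamma(\alpha)/\pi$ into $1/\Gamma(1-\alpha)$, yielding the stated identity.

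The main technical point is the application of Fubini. Since $f \in D(A)$ implies $f' \in H^\beta$, Remark 2 gives $\|f'\|_\infty \le \|f'\|_\beta < \infty$, so the absolute value of the integrand is dominated by $\|f'\|_\beta\,\lambda^{\alpha-1} e^{-\lambda(x-t)}$. Integrating in $t$ first reduces the integrability check to
\[
\|f'\|_\beta \int_0^\infty \lambda^{\alpha-1}\, \frac{1 - e^{-\lambda x}}{\lambda}\,d\lambda,
\]
which is finite because the integrand behaves like $x\,\lambda^{\alpha-1}$ near $0$ (integrable since $\alpha > 0$) and like $\lambda^{\alpha-2}$ at infinity (integrable since $\alpha < 1$). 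With this bound the swap of integrals is legitimate and the rest of the argument is a routine computation.
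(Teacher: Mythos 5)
Your proposal is correct and follows the same route as the paper, which simply invokes the Balakrishnan formula (\ref{balakrishnan}) and states that the representation "follows"; you have filled in exactly the omitted computation (explicit resolvent, Fubini, the Gamma integral, and the reflection formula), and your domination argument justifying the interchange of integrals is sound.
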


\begin{proof}
Using the Balakrishnan representation of fractional power of sectorial
operator (\ref{balakrishnan}), the previous representation follows.
\end{proof}
In the next subsection the fractional power of operator $\nabla_{h}$ and
$A_{h}$ are constructed.

\subsection{Fractional nabla operators}

Before studying the sectoriality of $\nabla_{h}$ and $A_{h}$ we begin by a
surprising and useful result. In fact, elementary calculations show that the
operator $\mathcal{I}_{h}$ commutes with $\nabla_{h}$. This property has an
interesting consequence for the resolvent operator given in the next lemma and
stated in general framework.

\begin{lemma}
Let $\mathcal{X}$ be a Banach space, $B,T\in\mathcal{L(X)}$ such that $T$ is
idempotent and $T$ commute with $B$ then, for every $\lambda\in\rho
(B),\lambda\neq0$%

\[
R(\lambda,TB)=TR(\lambda,B)+\frac{1}{\lambda}\left(  I-T\right)
\]
\label{resTA}
\end{lemma}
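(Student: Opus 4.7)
The plan is to verify directly that the operator $S_\lambda := TR(\lambda,B) + \tfrac{1}{\lambda}(I-T)$ is a two-sided inverse of $\lambda I - TB$ on $\mathcal{X}$. This will simultaneously establish that $\lambda \in \rho(TB)$ and that $R(\lambda, TB) = S_\lambda$. Because both operators are bounded on all of $\mathcal{X}$, no domain issues arise and the computation is purely algebraic.

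First I would record the three ingredients that make everything go: (i) $T^2 = T$, (ii) $TB = BT$, from which one deduces $TBT = BT^2 = BT = TB$; and (iii) the standard resolvent identity $BR(\lambda,B) = \lambda R(\lambda,B) - I$, which holds since $B = \lambda I - (\lambda I - B)$. These will let me collapse every product that appears after distributing.

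Next I would compute $(\lambda I - TB)\,S_\lambda$ by expanding into two pieces. For the first piece,
\[
(\lambda I - TB)\,TR(\lambda,B) = \lambda T R(\lambda,B) - TBT\,R(\lambda,B) = \lambda T R(\lambda,B) - T\bigl[\lambda R(\lambda,B) - I\bigr] = T,
\]
using $TBT = TB$ and the resolvent identity. For the second piece,
\[
(\lambda I - TB)\cdot \tfrac{1}{\lambda}(I-T) = (I-T) - \tfrac{1}{\lambda}\bigl[TB - TBT\bigr] = I - T,
\]
again by $TBT = TB$. Summing gives $(\lambda I - TB)\,S_\lambda = I$. The reverse product $S_\lambda\,(\lambda I - TB) = I$ is obtained by an entirely symmetric computation, again using $TB = BT$, $T^2 = T$, and $R(\lambda,B)B = \lambda R(\lambda,B) - I$.

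There is really no hard step here; the only point to be slightly careful about is the hypothesis $\lambda \neq 0$, needed to give meaning to $\tfrac{1}{\lambda}(I-T)$, and the use of $\lambda \in \rho(B)$ to make $R(\lambda,B)$ available. The substantive observation of the lemma is the structural one: when $T$ is an idempotent commuting with $B$, the resolvent of $TB$ decomposes along the complementary projections $T$ and $I-T$, with $B$ acting nontrivially on the range of $T$ and $TB$ acting as $0$ on the range of $I-T$ (so its resolvent there is just multiplication by $1/\lambda$). I would mention this interpretation briefly to motivate the formula before executing the direct verification.
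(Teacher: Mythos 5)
Your proof is correct. It differs from the paper's in direction rather than in substance: the paper \emph{derives} the formula by starting from the resolvent equation $f=(\lambda I-TB)g$, applying $T$ to get $Tf=(\lambda I-B)Tg$ and subtracting to get $f-Tf=\lambda(I-T)g$, then reassembling $g=Tg+(I-T)g=R(\lambda,B)Tf+\tfrac{1}{\lambda}(I-T)f$; you instead \emph{verify} the candidate $S_\lambda=TR(\lambda,B)+\tfrac{1}{\lambda}(I-T)$ by computing both products $(\lambda I-TB)S_\lambda$ and $S_\lambda(\lambda I-TB)$ and collapsing them with $T^2=T$, $TBT=TB$ and the identity $BR(\lambda,B)=\lambda R(\lambda,B)-I$. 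The paper's route explains where the formula comes from, but as written it only shows that any solution $g$ must have the stated form (a left-inverse/injectivity statement) and leaves the existence of a solution implicit; your two-sided verification closes that small gap and genuinely establishes $\lambda\in\rho(TB)$ together with the formula. One minor point: for the reverse product you also need $T$ to commute with $R(\lambda,B)$, which follows immediately from $TB=BT$ but deserves a one-line remark. Your closing interpretation --- that $TB$ vanishes on the range of $I-T$, so its resolvent acts there as $\tfrac{1}{\lambda}$, while on the range of $T$ it agrees with $B$ --- is exactly the structural content of the lemma and is worth keeping.
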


\begin{proof}
To obtain the resolvent operator for $TB$ we consider the equation, for
$f,g\in\mathcal{X},$%

\[
f=\left(  \lambda I-TB\right)  g
\]

then by idempotence of the operator $T$ and commutative property we get
\[
Tf=\left(  \lambda I-B\right)  Tg
\]

combining the above two equations we have%

\[
f-Tf=\lambda\left(  I-T\right)  g
\]

using the fact that
\[
Tg=\left(  \lambda I-B\right)  ^{-1}Tf
\]

then%
\[
g=R(\lambda,B)Tf+\frac{1}{\lambda}\left(  f-Tf\right)
\]

\end{proof}

\begin{proposition}
The family $\left(  \nabla_{h}\right)  _{h}$ is uniformly sectorial of angle
$\dfrac{\pi}{2}$ on $H^{\beta}.\label{sectorialnabla}$
\end{proposition}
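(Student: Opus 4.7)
The plan is to compute the resolvent $R(\lambda,\nabla_h)$ explicitly via a translation-operator Neumann series and then reduce the uniform resolvent estimate to an elementary complex-variable inequality. Extending each $f\in H^\beta$ by zero on $(-\infty,0]$ (which is compatible with the normalization $f(0)=0$) and introducing the backward shift $(\tau_h f)(x)=f(x-h)$, one has $\nabla_h=h^{-1}(I-\tau_h)$, so the resolvent equation $(\lambda I-\nabla_h)g=f$ is equivalent to $(I+\mu\tau_h)g=h\mu f$ with $\mu=1/(\lambda h-1)$. This suggests the candidate
\[
R(\lambda,\nabla_h)=h\mu\sum_{j=0}^{\infty}(-\mu)^{j}\tau_h^{\,j}.
\]

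The first key step is to establish that $\tau_h$ is a contraction on $H^\beta$, i.e.\ $\|\tau_h\|_\beta\le 1$. I would split any pair $s,t\in[0,1]$ into three cases (both in $[0,h]$, both in $[h,1]$, or one in each); the mixed case is handled by using $f(0)=0$ to bound $|f(t-h)|\le\|f\|_\beta|t-h|^\beta\le\|f\|_\beta|t-s|^\beta$. Once this is granted, $\|\tau_h^{\,j}\|_\beta\le 1$, the Neumann series converges absolutely whenever $|\mu|<1$, and with the identity $\lambda h\mu=1+\mu$ we obtain
\[
\|\lambda R(\lambda,\nabla_h)\|_\beta\le\frac{|1+\mu|}{1-|\mu|}=\frac{|\lambda|h}{|\lambda h-1|-1}.
\]

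It then remains to bound this last quantity uniformly in $h$ for $\lambda\notin\overline{S_{\omega'}}$, $\pi/2<\omega'<\pi$. The sector hypothesis translates into $-\operatorname{Re}\lambda\ge|\lambda|\,|\cos\omega'|$; combined with the expansion $|\lambda h-1|^{2}=1-2h\operatorname{Re}\lambda+h^{2}|\lambda|^{2}$ one obtains the pointwise inequality $|\lambda h-1|\ge 1+h|\lambda|\,|\cos\omega'|$, which yields the $h$-free bound $|1+\mu|/(1-|\mu|)\le 1/|\cos\omega'|$. The containment $\sigma(\nabla_h)\subset\overline{S_{\pi/2}}$ follows from the fact that the Neumann construction already furnishes $R(\lambda,\nabla_h)$ for every $\lambda$ with $\operatorname{Re}\lambda<0$.

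The main obstacle is the uniformity of this estimate in $h$: a crude termwise bound of the Neumann series produces a factor $(1-|\mu|)^{-1}$ that blows up as $|\mu|\uparrow 1$, which happens unavoidably near the imaginary axis at scale $|\lambda|\sim 1/h$. The argument succeeds only because the compensating prefactor $|1+\mu|=|\lambda|h/|\lambda h-1|$ vanishes at precisely the same rate, so the $h$-dependence cancels exactly and only the sector angle $\omega'$ survives in the final bound $M(\nabla_h,\omega')\le 1/|\cos\omega'|$.
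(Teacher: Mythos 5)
Your proposal is correct and follows the same overall strategy as the paper: an explicit resolvent formula for $\nabla_h$ as a weighted sum of backward shifts, a geometric-series bound leading to the quantity $|\lambda|h/(|1-\lambda h|-1)$, and then a sector-geometry argument to remove the $h$-dependence. Two of your sub-arguments differ from the paper's and are worth noting. First, you derive the resolvent as a Neumann series $h\mu\sum_j(-\mu)^j\tau_h^j$ and reduce the norm estimate to the single lemma $\|\tau_h\|_\beta\le 1$ (whose three-case verification, using $f(0)=0$ for the mixed case, is exactly right); the paper instead asserts the same finite-sum formula via Laplace transforms and estimates the H\"older quotient of $R(\lambda,\nabla_h)f$ directly, splitting the sum into shifted differences and boundary terms --- your packaging is more structural and makes the $\sigma(\nabla_h)\subset\overline{S_{\pi/2}}$ claim come for free from convergence of the series when $\operatorname{Re}\lambda<0$. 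Second, for the final bound you complete the square to get $|\lambda h-1|\ge 1+h|\lambda|\,|\cos\omega'|$, hence $|\lambda|h/(|\lambda h-1|-1)\le 1/|\cos\omega'|$ in one line; the paper reaches the same constant $-1/\cos\omega'$ by a longer calculus analysis of the auxiliary function $\varphi(z)=z/(\sqrt{z^2+1-2z\cos\omega}-1)$ (sign of $\varphi'$, limits at $0^+$ and $+\infty$). Your closing remark about the exact cancellation between $|1+\mu|$ and $(1-|\mu|)^{-1}$ near $|\lambda|\sim 1/h$ correctly identifies why a cruder estimate would fail. No gaps.
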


\begin{proof}
It can be easily proved using Laplace and inverse Laplace transforms that%

\[
R(\lambda,\nabla_{h})f(x)=-h\sum_{j=0}^{n}\frac{1}{\left(  1-\lambda h\right)
^{j+1}}f(x-t_{j})
\]

First, we check the boundedness of $R(\lambda,\nabla_{h})$ in $\mathcal{L}%
\left(  H^{\beta}\right)  .$ For every $0\leq x<y\leq1,$%

\begin{align*}
& \frac{\left\vert R(\lambda,\nabla_{h})f(x)-R(\lambda,\nabla_{h}%
)f(y)\right\vert }{\left\vert x-y\right\vert ^{\beta}}\\
& \leq\dfrac{1}{\left\vert x-y\right\vert ^{\beta}}\left\{  \left\vert
-h\sum_{j=0}^{\left[  x/h\right]  }\frac{1}{\left(  1-\lambda h\right)
^{j+1}}\left[  f(x-t_{j})-f(y-t_{j})\right]  \right\vert +\left\vert
-h\sum_{\left[  x/h\right]  +1}^{\left[  y/h\right]  }\frac{1}{\left(
1-\lambda h\right)  ^{j+1}}f(x-t_{j})\right\vert \right\} \\
& \leq\left\Vert f\right\Vert _{\beta}\sum_{j=1}^{n}\frac{h}{\left\vert
1-\lambda h\right\vert ^{j}}%
\end{align*}

Using the sum of a geometric series we have%

\begin{equation}
\frac{\left\vert R(\lambda,\nabla_{h})f(x)-R(\lambda,\nabla_{h}%
)f(y)\right\vert }{\left\vert x-y\right\vert ^{\beta}}\leq\frac{h}{\left\vert
1-\lambda h\right\vert -1}\left\Vert f\right\Vert _{\beta}\label{majresnabla}%
\end{equation}

Now,observe that for any $\lambda\in\mathbb{C}\backslash\overline{S}_{\omega}$
,$\dfrac{\pi}{2}<\omega<\pi$ we have $\left\vert \lambda h-1\right\vert >1$
and%
\[
\frac{\left\vert \lambda\right\vert \left\vert R(\lambda,\nabla_{h}%
)f(x)-R(\lambda,\nabla_{h})f(y)\right\vert }{\left\vert x-y\right\vert
^{\beta}}\leq\frac{\left\vert \lambda\right\vert h}{\left\vert 1-\lambda
h\right\vert -1}\left\Vert f\right\Vert _{\beta}%
\]
Knowing that%
\[
\left\vert 1-\lambda h\right\vert ^{2}=h^{2}\left\vert \lambda\right\vert
^{2}+1-2\left\vert \lambda\right\vert \cos\left(  \arg\lambda\right)  \geq
h^{2}\left\vert \lambda\right\vert ^{2}+1-2h\left\vert \lambda\right\vert
\cos\omega
\]
Then%
\[
\frac{\left\vert \lambda\right\vert h}{\left\vert 1-\lambda h\right\vert
-1}\leq\frac{\left\vert \lambda\right\vert h}{\sqrt{h^{2}\left\vert
\lambda\right\vert ^{2}+1-2h\left\vert \lambda\right\vert \cos\omega}-1}%
\]
Put $\varphi(z)=\dfrac{z}{\sqrt{z^{2}+1-2z\cos\omega}-1}$ for $z>0.$It is easy
to see that $\varphi(+\infty)=1,$ $\varphi(0^{+})=\dfrac{-1}{\cos\omega}$ and
the derivative satisfies%
\begin{align*}
& \varphi^{\prime}(z)\\
& =\dfrac{\left(  -1+\cos^{2}\omega\right)  z^{2}}{\left(  \sqrt
{z^{2}+1-2z\cos\omega}-1\right)  ^{2}\sqrt{z^{2}+1-2z\cos\omega}\left(
1-z\cos\omega+\sqrt{z^{2}+1-2z\cos\omega}\right)  }\\
& <0
\end{align*}
Consequently, for every $z\in\left]  0,+\infty\right[  \ 1<\varphi
(z)\leq\dfrac{-1}{\cos\omega},$ which implies that%
\begin{equation}
\frac{\left\vert \lambda\right\vert \left\vert R(\lambda,\nabla_{h}%
)f(x)-R(\lambda,\nabla_{h})f(y)\right\vert }{\left\vert x-y\right\vert
^{\beta}}\leq\dfrac{-1}{\cos\omega}\left\Vert f\right\Vert _{\beta
}\label{majreslamnabla}%
\end{equation}
We conclude that the family $\left(  \nabla_{h}\right)  _{h}$ is uniformly
sectorial of angle $\dfrac{\pi}{2}$.
\end{proof}

Consequently the extended nabla operator $A_{h}$ is also sectorial as shown in
the next corollary.

\begin{corollary}
The family $\left(  A_{h}\right)  _{h}$ is uniformly sectorial of angle
$\dfrac{\pi}{2}$ on $H^{\beta}.$
\end{corollary}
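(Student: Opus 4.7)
The plan is to obtain the corollary as an immediate consequence of Proposition \ref{sectorialnabla} together with Lemma \ref{resTA}, exploiting that the piecewise linear interpolation operator $\mathcal{I}_h$ is both idempotent and, as mentioned in the text, commutes with $\nabla_h$.

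First I would verify the two structural hypotheses of Lemma \ref{resTA} for $T = \mathcal{I}_h$ and $B = \nabla_h$. Idempotence is clear: on each subinterval $[t_{k-1},t_k]$, $\mathcal{I}_h f$ is already affine with the same nodal values as $f$, so $\mathcal{I}_h^2 = \mathcal{I}_h$. Commutation $\mathcal{I}_h \nabla_h = \nabla_h \mathcal{I}_h$ is noted in the paper and reduces to a direct nodal computation. I would also record that $\mathcal{I}_h$ is uniformly bounded on $H^\beta$, in fact $\|\mathcal{I}_h\|_\beta \leq 1$: by Lemma \ref{rac} the Hölder seminorm of the polygonal line $\mathcal{I}_h f$ equals the maximum over pairs of vertices of $|f(t_j)-f(t_i)|/(t_j-t_i)^\beta$, which is bounded by $\|f\|_\beta$.

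Next, for $\lambda \in \rho(\nabla_h)$ with $\lambda \neq 0$, Lemma \ref{resTA} gives
\[
R(\lambda, A_h) = \mathcal{I}_h R(\lambda, \nabla_h) + \frac{1}{\lambda}(I - \mathcal{I}_h),
\]
so
\[
\lambda R(\lambda, A_h) = \mathcal{I}_h \, \lambda R(\lambda, \nabla_h) + (I - \mathcal{I}_h).
\]
Taking Hölder norms and using $\|\mathcal{I}_h\|_\beta \leq 1$ together with the estimate $\|\lambda R(\lambda,\nabla_h)\|_\beta \leq -1/\cos\omega$ from Proposition \ref{sectorialnabla}, I would deduce that for every $\omega \in (\pi/2,\pi)$ and every $\lambda \notin \overline{S_\omega}$,
\[
\|\lambda R(\lambda, A_h)\|_\beta \leq \frac{-1}{\cos\omega} + 2,
\]
with a bound independent of $h$. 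This provides the uniform resolvent estimate in the definition of uniform sectoriality.

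It remains to localise the spectrum. From the resolvent formula above, every $\lambda \neq 0$ in $\rho(\nabla_h)$ lies in $\rho(A_h)$, hence $\sigma(A_h) \subset \sigma(\nabla_h) \cup \{0\} \subset \overline{S_{\pi/2}}$, which satisfies condition 1) of the definition of sectorial operator of angle $\pi/2$. Combined with the uniform bound above, this gives $(A_h)_h$ uniformly sectorial of angle $\pi/2$. The only mildly delicate step I anticipate is the uniform bound $\|\mathcal{I}_h\|_\beta \leq 1$; everything else is a routine transfer of the sectoriality of $\nabla_h$ through the resolvent identity.
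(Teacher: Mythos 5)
Your proof is correct and follows essentially the same route as the paper: apply Lemma \ref{resTA} to write $R(\lambda,A_h)=\mathcal{I}_h R(\lambda,\nabla_h)+\tfrac{1}{\lambda}(I-\mathcal{I}_h)$ and transfer the uniform resolvent bound of Proposition \ref{sectorialnabla} through the contractivity of $\mathcal{I}_h$ given by Lemma \ref{rac}. The only differences are cosmetic --- you bound the term $(I-\mathcal{I}_h)$ by the operator norm $2$ where the paper invokes Lemma \ref{erreurinterpolation} to get the constant $4$, and you spell out the spectral inclusion $\sigma(A_h)\subset\sigma(\nabla_h)\cup\{0\}\subset\overline{S_{\pi/2}}$, which the paper leaves implicit.
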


\begin{proof}
From lemma \ref{resTA} we have%

\[
R(\lambda,A_{h})f(x)=\mathcal{I}_{h}\left(  R(\lambda,\nabla_{h})f\right)
(x)+\frac{1}{\lambda}\left(  f-\mathcal{I}_{h}f\right)  (x)
\]

Then%

\begin{align*}
& \frac{\left\vert \lambda\right\vert \left\vert R(\lambda,A_{h}%
)f(x)-R(\lambda,A_{h})f(y)\right\vert }{\left\vert x-y\right\vert ^{\beta}}\\
& \leq\frac{\left\vert \lambda\right\vert \left\vert \mathcal{I}_{h}%
R(\lambda,A_{h})f(x)-\mathcal{I}_{h}R(\lambda,A_{h})f(y)\right\vert
}{\left\vert x-y\right\vert ^{\beta}}+\frac{\left\vert \left(  I-\mathcal{I}%
_{h}\right)  f(x)-\left(  I-\mathcal{I}_{h}\right)  f(y)\right\vert
}{\left\vert x-y\right\vert ^{\beta}}%
\end{align*}

from lemmas \ref{rac} and \ref{erreurinterpolation} , there exist $0\leq
m,l\leq n$ such that%

\[
\frac{\left\vert \lambda\right\vert \left\vert R(\lambda,A_{h})f(x)-R(\lambda
,A_{h})f(y)\right\vert }{\left\vert x-y\right\vert ^{\beta}}\leq
\frac{\left\vert \lambda\right\vert \left\vert R(\lambda,A_{h})f(t_{m}%
)-R(\lambda,A_{h})f(t_{l})\right\vert }{\left\vert t_{m}-t_{l}\right\vert ^{\beta}%
}+4\omega_{\beta}(f,h)
\]

From proposition \ref{sectorialnabla} we get the estimate%

\[
\frac{\left\vert \lambda\right\vert \left\vert R(\lambda,A_{h})f(x)-R(\lambda
,A_{h})f(y)\right\vert }{\left\vert x-y\right\vert ^{\beta}}\leq\left(
\dfrac{-1}{\cos\omega}+4\right)  \left\Vert f\right\Vert _{\beta}%
\]

\end{proof}

As a result we are able to define the fractional power of $\nabla_{h}$ and
$A_{h}.$ This is the purpose of the following theorem.

\begin{theorem}
Let $0<\alpha<1,$ then fractional nabla operator is%

\[
\nabla_{h}^{\alpha}f(x)=\frac{h^{1-\alpha}}{\Gamma(1-\alpha)}\overset{\left[
x/h\right]  }{\underset{i=1}{\sum}}\frac{\Gamma(j+1-\alpha)}{\Gamma
(j+1)}\nabla_{h}f(x-t_{j})
\]

and the fractional operator $A_{h}^{\alpha}$ is%

\begin{align*}
& A_{h}^{\alpha}f(x)\\
& =\sum_{k=0}^{n}\left(  \frac{x-t_{k-1}}{h}\overset{k}{\underset{i=1}{\sum}%
}\frac{\Gamma(j+1-\alpha)}{\Gamma(j+1)}\nabla_{h}f(t_{k}-t_{j})+\frac{t_{k}%
-x}{h}\overset{k-1}{\underset{i=1}{\sum}}\frac{\Gamma(j+1-\alpha)}%
{\Gamma(j+1)}\nabla_{h}f(t_{k-1}-t_{j})\right)  \mathbb{1}_{\left[  t_{k-1},t_{k}
\right]  }(x)%
\end{align*}

\end{theorem}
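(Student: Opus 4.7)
The plan is to apply the Balakrishnan representation (\ref{balakrishnan}) to each of the sectorial operators $\nabla_{h}$ and $A_{h}$, whose sectoriality was already established. For $\nabla_{h}^{\alpha}f$ I would substitute the closed-form resolvent
\[
R(-\lambda,\nabla_{h})f(x) = -h\sum_{j=0}^{[x/h]}\frac{f(x-t_{j})}{(1+\lambda h)^{j+1}},
\]
where the sum terminates at $[x/h]$ because $f\equiv 0$ on $(-\infty,0]$ (recall $f(0)=0$). Interchanging the finite $j$-sum with the $\lambda$-integral and applying the substitution $u=\lambda h$ reduces each term to the classical Beta integral
\[
\int_{0}^{\infty}\frac{u^{\alpha-1}}{(1+u)^{j+1}}\,du = \frac{\Gamma(\alpha)\Gamma(j+1-\alpha)}{\Gamma(j+1)},
\]
and Euler's reflection formula $\Gamma(\alpha)\Gamma(1-\alpha)=\pi/\sin\pi\alpha$ collapses the overall prefactor $-\sin(\pi\alpha)\Gamma(\alpha)/\pi$ into $1/\Gamma(1-\alpha)$, producing the announced expression for $\nabla_{h}^{\alpha}f(x)$.

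For $A_{h}^{\alpha}$ the strategy is to establish the factorization $A_{h}^{\alpha}=\mathcal{I}_{h}\,\nabla_{h}^{\alpha}$ and then read off the stated formula from the definition of $\mathcal{I}_{h}$. Since $\mathcal{I}_{h}$ is idempotent and commutes with $\nabla_{h}$, Lemma~\ref{resTA} gives
\[
R(-\lambda,A_{h}) = \mathcal{I}_{h}R(-\lambda,\nabla_{h}) - \tfrac{1}{\lambda}(I-\mathcal{I}_{h}).
\]
Applied to $A_{h}f=\mathcal{I}_{h}\nabla_{h}f$, idempotence forces $(I-\mathcal{I}_{h})A_{h}f=0$, so $R(-\lambda,A_{h})A_{h}f = \mathcal{I}_{h}R(-\lambda,\nabla_{h})\nabla_{h}f$. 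Pulling the bounded operator $\mathcal{I}_{h}$ outside the Balakrishnan integral then gives $A_{h}^{\alpha}f=\mathcal{I}_{h}(\nabla_{h}^{\alpha}f)$, and substituting the expression already obtained for $\nabla_{h}^{\alpha}f$ evaluated at the nodes $t_{k-1}$ and $t_{k}$ into the piecewise-affine definition of $\mathcal{I}_{h}$ yields the second formula.

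The main technical point to verify is the legitimacy of two interchanges: the finite $j$-sum with the improper $\lambda$-integral, and the bounded operator $\mathcal{I}_{h}$ with that integral. The first is routine, since for every fixed $j\geq 0$ the integrand $\lambda^{\alpha-1}(1+\lambda h)^{-(j+1)}$ is integrable on $(0,\infty)$ because $0<\alpha<1$, so Fubini's theorem applies. The second follows from the uniform sectorial estimate (\ref{majreslamnabla}), which guarantees that $\lambda\mapsto\lambda^{\alpha-1}\|R(-\lambda,\nabla_{h})\nabla_{h}f\|_{\beta}$ is integrable on $(0,\infty)$; the Balakrishnan integral then converges absolutely as a Bochner integral in $H^{\beta}$, so the bounded linear map $\mathcal{I}_{h}$ may indeed be pulled inside.
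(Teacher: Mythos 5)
Your proposal is correct and follows essentially the same route as the paper: Balakrishnan's formula applied to the explicit resolvent of $\nabla_{h}$, the Beta integral $\int_{0}^{\infty}u^{\alpha-1}(1+u)^{-(j+1)}du=\Gamma(\alpha)\Gamma(j+1-\alpha)/\Gamma(j+1)$ together with the reflection formula, and then Lemma~\ref{resTA} plus idempotence and commutation of $\mathcal{I}_{h}$ to reduce $A_{h}^{\alpha}f$ to $\mathcal{I}_{h}\nabla_{h}^{\alpha}f$. The only difference is cosmetic: you justify the sum--integral and operator--integral interchanges explicitly, where the paper performs them silently and cites \cite{ash} for the integral evaluation.
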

We call $A_{h}^{\alpha}$ the fractional extended nabla operator.
\begin{proof}
Using Balakrishnan representation of fractional power of sectorial operator
\ (\ref{balakrishnan}) , we get when $0<\alpha<1$%

\begin{align*}
\nabla_{h}^{\alpha}f(x)  & =\frac{\sin\alpha\pi}{\pi}\int_{0}^{+\infty}%
\lambda^{\alpha-1}\left(  \lambda+\nabla_{h}\right)  ^{-1}\nabla
_{h}f(x)d\lambda\\
& =-\frac{\sin\alpha\pi}{\pi}\int_{0}^{+\infty}\lambda^{\alpha-1}R\left(
-\lambda,\nabla_{h}\right)  \nabla_{h}f(x)d\lambda
\end{align*}

Then
\begin{align*}
&  \nabla_{h}^{\alpha}f(x)\\
&  =\frac{\sin\alpha\pi}{\pi}\int_{0}^{+\infty}\lambda^{\alpha-1}h\sum
_{j=0}^{\left[  x/h\right]  }\frac{1}{\left(  1+\lambda h\right)  ^{j+1}%
}\nabla_{h}f(x-t_{j})d\lambda\\
&  =h\sum_{j=0}^{\left[  x/h\right]  }\left(  \frac{\sin\alpha\pi}{\pi}\int_{0}^{+\infty}%
\lambda^{\alpha-1}\frac{1}{\left(  1+\lambda h\right)  ^{j+1}}d\lambda\right)
\nabla_{h}f(x-t_{j})
\end{align*}

Similar calculations to those \ in \cite[Theorem 3.1]{ash} give%

\[
\int_{0}^{+\infty}\lambda^{\alpha-1}\frac{1}{\left(  1+\lambda h\right)
^{j+1}}d\lambda=h^{-\alpha}\frac{\Gamma(j+1-\alpha)\Gamma(\alpha)}%
{\Gamma(j+1)}%
\]

Therfore%
\[
\nabla_{h}^{\alpha}f(x)=\frac{h^{1-\alpha}}{\Gamma(1-\alpha)}\sum
_{j=0}^{\left[  x/h\right]  }\frac{\Gamma(j+1-\alpha)}{\Gamma(j+1)}\nabla
_{h}f(x-t_{j})
\]

We now turn to the evaluation of $A_{h}^{\alpha}f$. From lemma \ref{resTA} we get%

\begin{align*}
A_{h}^{\alpha}f(x)  & =-\frac{\sin\alpha\pi}{\pi}\int_{0}^{+\infty}%
\lambda^{\alpha-1}R\left(  -\lambda,A_{h}\right)  A_{h}f(x)d\lambda\\
& =-\frac{\sin\alpha\pi}{\pi}\int_{0}^{+\infty}\lambda^{\alpha-1}%
\mathcal{I}_{h}R\left(  -\lambda,\nabla_{h}\right)  \mathcal{I}_{h}\nabla
_{h}f(x)d\lambda\\
& =-\frac{\sin\alpha\pi}{\pi}\int_{0}^{+\infty}\lambda^{\alpha-1}%
\mathcal{I}_{h}R\left(  -\lambda,\nabla_{h}\right)  \nabla_{h}f(x)d\lambda
\end{align*}

the required evaluation of $A_{h}^{\alpha}f$ \ then follows.
\end{proof}

\begin{remark}
The operator $\nabla_{h}^{\alpha}$ is nothing but the Gr\"{u}nwald-Letnikov operator%

\[
\nabla_{h}^{\alpha}f(x)=h^{-\alpha}\overset{\left[  x/h\right]  }%
{\underset{j=0}{\sum}}(-1)^{j}\binom{\alpha}{j}f(x-jh)
\]

Let us mention that this operator was defined in a formal way as a
generalization of difference formulas of integer order by replacing the
integer order by a real number.

It is worth to underscore that in \cite{ash}\ discrete Gr\"{u}nwald-Letnikov
approximations was called the Riemann-Liouville fractional derivative of order
$\alpha.$
\end{remark}

The remaining problem is to study if the strong convergence $(A_{h}f)_{h}$ to
$Af$ can gives rise to the convergence of power operators; this is the aim of
the next section.

\section{H\"{o}lderian convergence of fractional extended nabla operator to fractional derivative}

The following  result, known elsewhere, is given in a suitable form for later uses.
\begin{lemma}
There exists a function $\Phi_{\alpha},$ such that%
\begin{center}
$\forall m\geq1\qquad\dfrac{\Gamma(m+\alpha)}{\Gamma(m+1)}=m^{\alpha-1}+\dfrac{%
1}{\Gamma(1-\alpha)}\Phi_{\alpha}(m)$,\\ with\; $\left\vert \Phi_{\alpha}(m)\right\vert $ $\leq \dfrac{
\Gamma(2-\alpha)}{2}m^{\alpha-2}$.
\end{center}
\label{rapportdegamma}
\end{lemma}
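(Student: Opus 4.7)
The plan is to express both quantities on the left and right of the asymptotic equality as Laplace-type integrals and then estimate the integrand of the difference pointwise.

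\textbf{Step 1 (integral representation of the ratio).} Starting from the Beta function identity
\[
\frac{\Gamma(m+\alpha)}{\Gamma(m+1)}=\frac{1}{\Gamma(1-\alpha)}\,B(m+\alpha,1-\alpha)=\frac{1}{\Gamma(1-\alpha)}\int_{0}^{1}t^{m+\alpha-1}(1-t)^{-\alpha}\,dt,
\]
the substitution $t=e^{-s}$ yields
\[
\frac{\Gamma(m+\alpha)}{\Gamma(m+1)}=\frac{1}{\Gamma(1-\alpha)}\int_{0}^{\infty}e^{-ms}\bigl(e^{s}-1\bigr)^{-\alpha}\,ds.
\]

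\textbf{Step 2 (integral representation of the leading term).} Using $\Gamma(1-\alpha)=\int_{0}^{\infty}u^{-\alpha}e^{-u}\,du$ and the change of variable $u=ms$,
\[
m^{\alpha-1}=\frac{1}{\Gamma(1-\alpha)}\int_{0}^{\infty}e^{-ms}\,s^{-\alpha}\,ds.
\]
Subtracting gives
\[
\Phi_{\alpha}(m)=\Gamma(1-\alpha)\!\left(\frac{\Gamma(m+\alpha)}{\Gamma(m+1)}-m^{\alpha-1}\right)=\int_{0}^{\infty}e^{-ms}\,\psi(s)\,ds,\qquad \psi(s):=\bigl(e^{s}-1\bigr)^{-\alpha}-s^{-\alpha}.
\]

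\textbf{Step 3 (pointwise estimate of $\psi$).} Setting $v(s)=s/(e^{s}-1)\in(0,1]$ one has $\psi(s)=s^{-\alpha}\bigl(v(s)^{\alpha}-1\bigr)$, hence $|\psi(s)|=s^{-\alpha}\bigl(1-v(s)^{\alpha}\bigr)$. Since $0<\alpha<1$ and $v\in(0,1]$, concavity of $x\mapsto x^{\alpha}$ gives $v^{\alpha}\geq v$, so $1-v^{\alpha}\leq 1-v$. The key inequality is then
\[
1-\frac{s}{e^{s}-1}\leq \frac{s}{2}\qquad\text{for every }s>0,
\]
which is the main (and only nontrivial) calculation. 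It is equivalent to $(2-s)e^{s}\leq 2+s$ on $(0,2)$ (and trivial for $s\geq 2$, since the left side is then $\leq 1 \leq s/2$). I would prove this by setting $g(s)=(2-s)e^{s}-2-s$ and observing $g(0)=g'(0)=0$ while $g''(s)=-se^{s}\leq 0$, so $g'\leq 0$ and $g\leq 0$ on $[0,2)$. Consequently
\[
|\psi(s)|\leq s^{-\alpha}\cdot\frac{s}{2}=\frac{s^{1-\alpha}}{2}\qquad\text{for all }s>0.
\]

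\textbf{Step 4 (integration).} Plugging the bound into the expression for $\Phi_{\alpha}(m)$,
\[
|\Phi_{\alpha}(m)|\leq \frac{1}{2}\int_{0}^{\infty}e^{-ms}s^{1-\alpha}\,ds=\frac{\Gamma(2-\alpha)}{2}\,m^{\alpha-2},
\]
which is exactly the claimed estimate. The main (and essentially only) obstacle is the elementary but nontrivial inequality in Step~3; everything else is a matter of arranging integral representations.
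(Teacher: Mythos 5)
Your proof is correct and follows essentially the same route as the paper's: the Beta-function representation $\frac{\Gamma(m+\alpha)}{\Gamma(m+1)}=\frac{1}{\Gamma(1-\alpha)}\int_0^1 t^{m+\alpha-1}(1-t)^{-\alpha}dt$, the substitution $t=e^{-s}$, a pointwise bound of the form $\left\vert (e^s-1)^{-\alpha}-s^{-\alpha}\right\vert\le \frac{1}{2}s^{1-\alpha}$, and the final Laplace integral giving $\frac{\Gamma(2-\alpha)}{2}m^{\alpha-2}$. The only divergences are minor and in your favor: where the paper reaches $\left\vert(1+\varphi(u))^{\alpha}-1\right\vert\le\left\vert\varphi(u)\right\vert$ via Taylor's formula with integral remainder and the bound $\int_0^1(1+\xi\varphi(u))^{\alpha-1}d\xi\le 1/\alpha$, you use the more elementary $v^{\alpha}\ge v$ for $v\in(0,1]$; and you actually prove the key inequality $1-s/(e^s-1)\le s/2$ (via $g(s)=(2-s)e^{s}-2-s$ with $g(0)=g'(0)=0$ and $g''\le 0$), which the paper only asserts through the unproved claim that $\varphi(u)/u$ increases from $-1/2$ to $0$.
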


\begin{proof}
From the definition of the beta function we have%
\begin{equation}
\dfrac{\Gamma(m+\alpha)}{\Gamma(m+1)}=\frac{1}{\Gamma(1-\alpha)}\int_{0}%
^{1}t^{(m+\alpha-1)}(1-t)^{-\alpha}dt \label{beta}%
\end{equation}
Let $t=e^{-u}$ then the equality (\ref{beta}) becomes%
$$
\dfrac{\Gamma(m+\alpha)}{\Gamma(m+1)} =\frac{1}{\Gamma(1-\alpha)}\int
_{0}^{+\infty}e^{-mu}(e^{u}-1)^{-\alpha}du =\frac{1}{\Gamma(1-\alpha)}\int_{0}^{+\infty}e^{-mu}u^{-\alpha}(\dfrac
{u}{e^{u}-1})^{\alpha}du
$$
Using the generating function of the Bernoulli numbers%
\[
G(u)=\dfrac{u}{e^{u}-1}=\overset{+\infty}{\underset{k=0}{\sum}}B_{k}%
\dfrac{u^{k}}{k!}=1+\varphi(u)>0
\]
where $\varphi:\left[  0,+\infty\right[  \rightarrow\left] -1,0\right]  $ is a
continuous function. We have%
\[
\dfrac{\Gamma(m+\alpha)}{\Gamma(m+1)}=\frac{1}{\Gamma(1-\alpha)}\int
_{0}^{+\infty}e^{-mu}u^{-\alpha}(1+\varphi(u))^{\alpha}du
\]
Now, Taylor's formula with integral remainder applied to the function
$(1+\varphi(u))^{\alpha}$ gives $\ $%
\[
(1+\varphi(u))^{\alpha}=1+\alpha\varphi(u)\int_{0}^{1}(1+\xi\varphi
(u))^{\alpha-1}d\xi
\]
Therefore%
$$
\dfrac{\Gamma(m+\alpha)}{\Gamma(m+1)}  =\frac{1}{\Gamma(1-\alpha)}\int
_{0}^{+\infty}e^{-mu}u^{-\alpha}du+\frac{\alpha}{\Gamma(1-\alpha)}\int_{0}^{+\infty}e^{-mu}u^{-\alpha}%
\varphi(u)\int_{0}^{1}(1+\xi\varphi(u))^{\alpha-1}d\xi du
$$
and then%
\begin{equation*}
\dfrac{\Gamma(m+\alpha)}{\Gamma(m+1)}=m^{\alpha-1}+\frac{1}%
{\Gamma(1-\alpha)}\Phi_{\alpha}(m)
\end{equation*}
where
\[
\Phi_{\alpha}(m)=\alpha\int_{0}^{+\infty}e^{-mu}u^{-\alpha}\varphi(u)\int_{0}^{1}%
(1+\xi\varphi(u))^{\alpha-1}d\xi du
\]
From the identity\;$1+\xi\varphi(u)=1-\xi+\xi(1+ \varphi(u))$\;
and the fact that\; $1+\varphi(u)>0$\; for every\; $u\geq0$,  we have
\[
1+\xi\varphi(u)\geq1-\xi\qquad\text{and}\qquad\ \left(  1+\xi\varphi
(u)\right)  ^{\alpha-1}\leq\left(  1-\xi\right)  ^{\alpha-1}%
\]
Consequently%
\[
\int_{0}^{1}(1+\xi\varphi(u))^{\alpha-1}d\xi\leq\int_{0}^{1}(1-\xi)^{\alpha
-1}d\xi\leq\dfrac{1}{\alpha}
\]
Hence%
\begin{equation}
\alpha\left\vert \int_{0}^{+\infty}e^{-mu}u^{-\alpha}\varphi(u)\int_{0}^{1}%
(1+\xi\varphi(u))^{\alpha-1}d\xi du\right\vert \leq\int_{0}^{+\infty}%
e^{-mu}u^{-\alpha}\left\vert \varphi(u)\right\vert du \label{ineq3}
\end{equation}
The function $\dfrac{\varphi(u)}{u}$ is strictly increasing on $\left[
0,+\infty\right]  $, $\underset{u\rightarrow 0^{+}}{\lim }\dfrac{\varphi(u)}{u}=-\dfrac
{1}{2}$ and $\underset{u\rightarrow \infty}{\lim }\dfrac{\varphi(u)}{u}=0$ .\\
So, $\left\vert \dfrac{\varphi(u)}{u}\right\vert \leq\dfrac{1}{2}$, and the
inequality (\ref{ineq3}) becomes%
\[
\left\vert \Phi_{\alpha}(m)\right\vert \leq\dfrac{1}{2}\int_{0}^{+\infty}e^{-mu}%
u^{1-\alpha}du
\]
and%
\begin{equation*}
\left\vert \Phi_{\alpha}(m)\right\vert \leq\dfrac{\Gamma(2-\alpha)m^{\alpha-2}}{2}
\end{equation*}
\end{proof}

Before stating the convergence theorem, we define for all $f$ in $D(A),$ the function $\varphi$  by
\[
\varphi(x):=\frac{1}{\Gamma\left(  1-\alpha\right)  }\int_{0}^{x}%
(x-t)^{-\alpha}A_{h}f\left(  t\right)  dt
\]

For the construction of the convergence result proof we need the following lemmas

\begin{lemma}

For all $0<\beta<1$ such that $1-\alpha-\beta>0$ we have

\[
\omega_{\beta}(\varphi,h)\leq\frac{8}{\Gamma\left(  2-\alpha\right)
}\left\Vert f^{\prime}\right\Vert _{\beta}h^{1-\alpha-\beta}%
\]
\label{omegaphi}
\end{lemma}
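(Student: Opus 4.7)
My plan is to regard $\varphi$ as the Riemann--Liouville integral of order $1-\alpha$ applied to $A_h f$, and to reduce the estimate to (a) a uniform bound $\|A_h f\|_{\infty} \leq \|f'\|_{\beta}$ and (b) the standard decomposition of the fractional integral increment.

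First I would establish the $L^{\infty}$ bound on $A_h f$. Since $A_h f = \mathcal{I}_h \nabla_h f$ is the piecewise linear interpolation through the vertices $(t_k, (\nabla_h f)(t_k))$, it suffices to bound these vertex values. For $k \geq 1$ the mean value theorem gives $\xi_k \in (t_{k-1}, t_k)$ with $(\nabla_h f)(t_k) = f'(\xi_k)$, and because $f' \in H^{\beta}$ forces $f'(0)=0$, one has $|f'(\xi_k)| = |f'(\xi_k)-f'(0)| \leq \|f'\|_{\beta}\, \xi_k^{\beta} \leq \|f'\|_{\beta}$. Since moreover $(\nabla_h f)(0) = f(0)/h = 0$, every linear segment of $A_h f$ lies between values bounded by $\|f'\|_{\beta}$, and therefore $\|A_h f\|_{\infty} \leq \|f'\|_{\beta}$.

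Next, for $0 \leq y < x \leq 1$ with $x-y \leq h$, I would split
\[
\varphi(x) - \varphi(y) = \frac{1}{\Gamma(1-\alpha)}\left[\int_y^x (x-t)^{-\alpha} A_h f(t)\, dt + \int_0^y \left( (x-t)^{-\alpha} - (y-t)^{-\alpha} \right) A_h f(t)\, dt \right].
\]
Using $\|A_h f\|_{\infty} \leq \|f'\|_{\beta}$, the first integral is immediately bounded by $\|f'\|_{\beta}(x-y)^{1-\alpha}/(1-\alpha)$. For the second, one computes explicitly
\[
\int_0^y \left( (y-t)^{-\alpha} - (x-t)^{-\alpha} \right) dt = \frac{1}{1-\alpha}\left( y^{1-\alpha} + (x-y)^{1-\alpha} - x^{1-\alpha} \right) \leq \frac{(x-y)^{1-\alpha}}{1-\alpha},
\]
where the last inequality follows from $y^{1-\alpha} \leq x^{1-\alpha}$. (The non-negativity of the same bracket, which justifies removing the absolute value, comes from subadditivity of $t\mapsto t^{1-\alpha}$.) Using $\Gamma(2-\alpha) = (1-\alpha)\Gamma(1-\alpha)$, these two estimates combine to
\[
|\varphi(x) - \varphi(y)| \leq \frac{2\,\|f'\|_{\beta}}{\Gamma(2-\alpha)} (x-y)^{1-\alpha}.
\]

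Finally I would divide by $(x-y)^{\beta}$ and exploit the assumption $1-\alpha-\beta > 0$: for $|x-y| \leq h$,
\[
\frac{|\varphi(x)-\varphi(y)|}{|x-y|^{\beta}} \leq \frac{2\,\|f'\|_{\beta}}{\Gamma(2-\alpha)}(x-y)^{1-\alpha-\beta} \leq \frac{2\,\|f'\|_{\beta}}{\Gamma(2-\alpha)}\, h^{1-\alpha-\beta},
\]
which is even a bit sharper than the stated constant $8$. The only delicate point is really the elementary convexity inequality used to handle the second integral; the rest is bookkeeping. I don't foresee a serious obstacle, as the Hölder hypothesis $f' \in H^{\beta}$ (in particular $f'(0)=0$) is exactly what makes the uniform bound on $A_h f$ work without incurring any extra factor of $h^{-\beta}$.
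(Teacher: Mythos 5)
Your proposal is correct, and it takes a genuinely different route from the paper's. The paper proves the lemma by unfolding the explicit piecewise-linear representation of $A_hf$: it splits $\int_0^x(x-t)^{-\alpha}A_hf(t)\,dt$ over the subintervals $[t_{i-1},t_i]$ of the grid, bounds each local convex combination of $\nabla_hf(t_{i-1})$ and $\nabla_hf(t_i)$ by $2\|f'\|_\beta$, telescopes the resulting sum of terms $(x-t_{i-1})^{1-\alpha}-(y-t_{i-1})^{1-\alpha}-(x-t_i)^{1-\alpha}+(y-t_i)^{1-\alpha}$ to get the constant $4$ when $x,y$ lie in the same subinterval, and then doubles it to $8$ by passing through the grid point $t_k$ when $x$ and $y$ straddle one. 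You instead use only the uniform bound $\|A_hf\|_\infty\leq\|f'\|_\beta$ (which is valid: the interpolant's sup norm is attained at a vertex, each vertex value is $f'(\xi_k)$ by the mean value theorem, and $\|f'\|_\infty\leq\|f'\|_\beta$ by the paper's Remark 2) together with the classical two-term decomposition of the Riemann--Liouville increment and the subadditivity of $t\mapsto t^{1-\alpha}$. This avoids the case analysis on the grid entirely, is insensitive to the piecewise-linear structure of $A_hf$ (any function with the same sup-norm bound would do), and yields the sharper constant $2$ in place of $8$; since the lemma is only used to produce an $O(h^{1-\alpha-\beta})$ term in the convergence theorem, the improved constant is harmless. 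The paper's grid-based computation is more in keeping with the following Lemma \ref{majphinabla}, where the explicit piecewise structure genuinely matters, but for this particular estimate your argument is both shorter and stronger.
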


\begin{proof}
Remark that the function $\varphi$ satisfies the following estimation%

\[
\frac{\left\vert \varphi(x)-\varphi(y)\right\vert }{\left\vert x-y\right\vert
^{\beta}}\leq\frac{1}{\Gamma\left(  2-\alpha\right)  }\left\Vert
A_{h}f\right\Vert _{\beta}\left(  \max(x,y)\right)  ^{1-\alpha}\leq\frac
{1}{\Gamma\left(  2-\alpha\right)  }\left\Vert A_{h}f\right\Vert _{\beta}%
\]

then $\varphi\in H^{\beta}\left[  0,1\right]  .$

Let us now estimate $\omega_{\beta}(\varphi,h),$ we distinguish two cases,

First $t_{k-1}<x<y\leq t_{k}:$

Notice that%
\begin{align*}
\Gamma\left(  1-\alpha\right)  \varphi(x)  & =\overset{k-1}{\underset
{i=1}{\sum}}\int_{t_{i-1}}^{t_{i}}(x-t)^{-\alpha}\left(  \frac{t-t_{i-1}}%
{h}\nabla_{h}f(t_{i})+\frac{t_{i}-t}{h}\nabla_{h}f(t_{i-1})\right)  dt+\\
& \int_{t_{k-1}}^{x}(x-t)^{-\alpha}\left(  \frac{t-t_{k-1}}{h}\nabla
_{h}f(t_{k})+\frac{t_{k}-t}{h}\nabla_{h}f(t_{k-1})\right)  dt
\end{align*}

Hence%

\begin{align*}
&  \Gamma\left(  1-\alpha\right)  \left(  \varphi(x)-\varphi(y)\right) \\
&  =\overset{k-1}{\underset{i=1}{\sum}}\int_{t_{i-1}}^{t_{i}}\left(
(x-t)^{-\alpha}-(y-t)^{-\alpha}\right)  \left(  \frac{t-t_{i-1}}{h}\nabla
_{h}f(t_{i})+\frac{t_{i}-t}{h}\nabla_{h}f(t_{i-1})\right)  dt\\
&  +\int_{t_{k-1}}^{x}\left(  (x-t)^{-\alpha}-(y-t)^{-\alpha}\right)  \left(
\frac{t-t_{k-1}}{h}\nabla_{h}f(t_{k})+\frac{t_{k}-t}{h}\nabla_{h}%
f(t_{k-1})\right)  dt+\\
&  \int_{x}^{y}(y-t)^{-\alpha}\left(  \frac{t-t_{k-1}}{h}\nabla_{h}%
f(t_{k})+\frac{t_{k}-t}{h}\nabla_{h}f(t_{k-1})\right)  dt
\end{align*}

and%

\begin{align*}
&\Gamma\left(  1-\alpha\right)     \left\vert \varphi(x)-\varphi(y)\right\vert
\\
&  \leq2\left\Vert f^{\prime}\right\Vert _{\beta}\left\{  \overset
{k-1}{\underset{i=1}{\sum}}\int_{t_{i-1}}^{t_{i}}\left(  (x-t)^{-\alpha
}-(y-t)^{-\alpha}\right)  dt+\int_{t_{k-1}}^{x}\left(  (x-t)^{-\alpha
}-(y-t)^{-\alpha}\right)  dt+\int_{x}^{y}(y-t)^{-\alpha}\right\}
\end{align*}

\bigskip which leads to%

\begin{align*}
& \left\vert \varphi(x)-\varphi(y)\right\vert \\
& \leq\frac{2}{\Gamma\left(  2-\alpha\right)  }\left\Vert f^{\prime
}\right\Vert _{\beta}\overset{k-1}{\underset{i=1}{\sum}}\left(  (x-t_{i-1}%
)^{1-\alpha}-(y-t_{i-1})^{1-\alpha}-(x-t_{i})^{1-\alpha}+(y-t_{i})^{1-\alpha
}\right)  \\
& +\frac{2}{\Gamma\left(  2-\alpha\right)  }\left\Vert f^{\prime}\right\Vert
_{\beta}\left[  (x-t_{k-1})^{1-\alpha}-(y-t_{k-1})^{1-\alpha}+2(y-x)^{1-\alpha
}\right]
\end{align*}

finally%
\begin{align*}
\frac{\left\vert \varphi(x)-\varphi(y)\right\vert }{\left\vert x-y\right\vert
^{\beta}}  & \leq\frac{4}{\Gamma\left(  2-\alpha\right)  }\left\Vert
f^{\prime}\right\Vert _{\beta}\left\vert y-x\right\vert ^{1-\alpha-\beta}\\
& \leq\frac{4}{\Gamma\left(  2-\alpha\right)  }\left\Vert f^{\prime
}\right\Vert _{\beta}h^{1-\alpha-\beta}%
\end{align*}

Second $t_{k-1}<x\leq t_{k}<y\leq t_{k+1}:$%
\begin{gather*}
\frac{\left\vert \varphi(x)-\varphi(y)\right\vert }{\left\vert x-y\right\vert
^{\beta}}\leq\frac{\left\vert \varphi(x)-\varphi(t_{k})\right\vert
}{\left\vert x-t_{k}\right\vert ^{\beta}}+\frac{\left\vert \varphi
(t_{k})-\varphi(y)\right\vert }{\left\vert t_{k}-y\right\vert ^{\beta}}\\
\leq\frac{8}{\Gamma\left(  2-\alpha\right)  }\left\Vert f^{\prime}\right\Vert
_{\beta}h^{1-\alpha-\beta}%
\end{gather*}

Therefore%
\[
\omega_{\beta}(\varphi,h)\leq\frac{8}{\Gamma\left(  2-\alpha\right)
}\left\Vert f^{\prime}\right\Vert _{\beta}h^{1-\alpha-\beta}%
\]

\end{proof}

\begin{lemma}
There exists $C>0$ such that  for every $0\leq k$ $\leq n$ 

\label{majphinabla}%
\[
\frac{\left\vert \varphi(t_{k})-\nabla_{h}^{\alpha}f(t_{k})\right\vert
}{h^{\beta}}\leq\left(  1+\frac{C}{\Gamma\left(  1-\alpha\right)  }\right)
\left\Vert f^{\prime}\right\Vert _{\beta}h^{1-\alpha-\beta}+\frac{2^{\beta}%
}{\Gamma\left(  2-\alpha\right)  }\omega_{\beta}(f^{\prime},2h)
\]

\end{lemma}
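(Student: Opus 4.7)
The plan is to insert the right-rectangle Riemann sum
\[
\psi(t_k):=\frac{1}{\Gamma(1-\alpha)}\sum_{j=1}^k\nabla_h f(t_j)\int_{t_{j-1}}^{t_j}(t_k-t)^{-\alpha}\,dt
\]
as an intermediary and split via the triangle inequality $|\varphi(t_k)-\nabla_h^\alpha f(t_k)|\le|\varphi(t_k)-\psi(t_k)|+|\psi(t_k)-\nabla_h^\alpha f(t_k)|$. Write $a_j:=\nabla_h f(t_j)$ with $a_0=0$. The piecewise linearity of $A_hf$ gives $A_hf(t)-a_j=\frac{t_j-t}{h}(a_{j-1}-a_j)$ on $[t_{j-1},t_j]$, so the first term reduces to a weighted sum of the increments $|a_j-a_{j-1}|$.

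For that sum, the mean value theorem gives $a_j=f'(\xi_j)$ for some $\xi_j\in(t_{j-1},t_j)$; for $j\ge 2$ this yields $|a_j-a_{j-1}|\le\omega_\beta(f',2h)(2h)^\beta$ since $|\xi_j-\xi_{j-1}|\le 2h$. For the exceptional $j=1$ term, the hypothesis $f'\in H^\beta$ forces $f'(0)=0$, so $|a_1|=|f'(\xi_1)-f'(0)|\le\omega_\beta(f',h)h^\beta\le\omega_\beta(f',2h)(2h)^\beta$ as well. Dropping the factor $\frac{t_j-t}{h}\le 1$ and computing $\sum_{j=1}^k\int_{t_{j-1}}^{t_j}(t_k-t)^{-\alpha}\,dt=\frac{t_k^{1-\alpha}}{1-\alpha}\le\frac{1}{1-\alpha}$ produces
\[
|\varphi(t_k)-\psi(t_k)|\le\frac{(2h)^\beta\,\omega_\beta(f',2h)}{\Gamma(2-\alpha)}.
\]

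For $|\psi(t_k)-\nabla_h^\alpha f(t_k)|$, set $M:=k-j+1$. The coefficient of $a_j$ in $\psi(t_k)$ equals $\frac{h^{1-\alpha}}{\Gamma(1-\alpha)}\eta^{-\alpha}$ for some $\eta\in(M-1,M)$ (by the MVT applied to $x\mapsto x^{1-\alpha}$), whereas in $\nabla_h^\alpha f(t_k)$ it equals $\frac{h^{1-\alpha}}{\Gamma(1-\alpha)}\frac{\Gamma(M-\alpha)}{\Gamma(M)}$. The case $M=1$ (i.e.\ $j=k$) is isolated: the coefficient difference collapses to $h^{1-\alpha}\bigl[\frac{1}{\Gamma(2-\alpha)}-1\bigr]$, which is bounded by $h^{1-\alpha}$ because $1/\Gamma(2-\alpha)-1\in[0,1]$ on $(0,1)$. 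For $M\ge 2$, lemma \ref{rapportdegamma} applied with parameter $1-\alpha$ and argument $M-1$ gives $\frac{\Gamma(M-\alpha)}{\Gamma(M)}=(M-1)^{-\alpha}+\frac{\Phi_{1-\alpha}(M-1)}{\Gamma(\alpha)}$ with $|\Phi_{1-\alpha}(M-1)|\le\frac{\Gamma(1+\alpha)}{2}(M-1)^{-1-\alpha}$, and a second MVT applied to $x\mapsto x^{-\alpha}$ yields $|\eta^{-\alpha}-(M-1)^{-\alpha}|\le\alpha(M-1)^{-1-\alpha}$. The coefficient difference is therefore $O\bigl((M-1)^{-1-\alpha}\bigr)$, whose sum over $M\ge 2$ is a convergent $\zeta$-value. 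Bounding $|a_j|\le\|f'\|_\beta$ and collecting the isolated $M=1$ term gives $|\psi(t_k)-\nabla_h^\alpha f(t_k)|\le\bigl(1+C/\Gamma(1-\alpha)\bigr)\|f'\|_\beta h^{1-\alpha}$. Adding the two estimates and dividing by $h^\beta$ produces the lemma. The most delicate step is precisely this coefficient comparison: both quantities are asymptotically $M^{-\alpha}$, and distilling the summable $O((M-1)^{-1-\alpha})$ correction forces us to combine lemma \ref{rapportdegamma} with a two-step mean value argument.
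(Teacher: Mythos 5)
Your proof is correct and follows essentially the same route as the paper's: both split $\varphi(t_k)-\nabla_h^\alpha f(t_k)$ into a part carried by the increments $\nabla_h f(t_j)-\nabla_h f(t_{j-1})$ (controlled by $\omega_\beta(f',2h)$ via the mean value theorem) and a coefficient-comparison part, which is handled by Lemma \ref{rapportdegamma} together with a mean-value estimate on $s\mapsto s^{-\alpha}$, a $\zeta(1+\alpha)$ bound on the summable correction, and an isolated near-diagonal term of size $O(h^{1-\alpha})\Vert f'\Vert_\beta$. The only difference is cosmetic: you insert a right-endpoint quadrature $\psi$ where the paper performs the exact integration and a summation by parts, which shifts the main coefficients by one index but changes nothing in the estimates.
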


\begin{proof}
Obviously if $k=0$ the lemma holds for every $C>0.$ Assume now that $k>0,$
then%
\[
\varphi(t_{k})=\frac{1}{\Gamma\left(  1-\alpha\right)  }\overset{k}%
{\underset{i=1}{\sum}}\int_{t_{j-1}}^{t_{j}}(t_{k}-t)^{-\alpha}\left[
\frac{t-t_{j-1}}{h}\nabla_{h}f\left(  t_{j}\right)  +\frac{t_{j}-t}{h}%
\nabla_{h}f\left(  t_{j-1}\right)  \right]  dt
\]

A simple integration leads to%

\begin{align*}
& \varphi(t_{k})\\
& =\frac{1}{\Gamma\left(  2-\alpha\right)  }\overset{k}{\underset{i=1}{\sum}%
}(t_{k}-t_{j-1})^{1-\alpha}\nabla_{h}f\left(  t_{j-1}\right)  -(t_{k}%
-t_{j})^{1-\alpha}\nabla_{h}f\left(  t_{j}\right)  \\
& +\frac{1}{\Gamma\left(  2-\alpha\right)  }\overset{k}{\underset{i=1}{\sum}%
}\frac{(t_{k}-t_{j-1})^{2-\alpha}-(t_{k}-t_{j})^{2-\alpha}}{(2-\alpha
)h}\left[  \nabla_{h}f\left(  t_{j}\right)  -\nabla_{h}f\left(  t_{j-1}%
\right)  \right]
\end{align*}

which can be arranged as follows%

\begin{align*}
& \varphi(t_{k})\\
& =\frac{1}{\Gamma\left(  2-\alpha\right)  }\overset{k-1}{\underset{i=1}{\sum
}}\left[  (t_{k}-t_{j})^{1-\alpha}-(t_{k}-t_{j+1})^{1-\alpha}\right]
\nabla_{h}f\left(  t_{j}\right)  \\
& +\frac{1}{\Gamma\left(  2-\alpha\right)  }\overset{k}{\underset{i=1}{\sum}%
}\left(  \frac{(t_{k}-t_{j-1})^{2-\alpha}-(t_{k}-t_{j})^{2-\alpha}}%
{(2-\alpha)h}-(t_{k}-t_{j})^{1-\alpha}\right)  \left(  \nabla_{h}f\left(
t_{j}\right)  -\nabla_{h}f\left(  t_{j-1}\right)  \right)
\end{align*}

Then %

\[
\varphi(t_{k})-\nabla_{h}^{\alpha}f(t_{k})=S_{1}+S_{2}-h^{1-\alpha}\nabla_{h}%
f(t_{k})
\]

where %

\[
S_{1}=\frac{h^{1-\alpha}}{\Gamma\left(  1-\alpha\right)  }\overset{k-1}%
{\underset{i=1}{\sum}}\left[  \frac{j^{1-\alpha}-(j-1)^{1-\alpha}}{1-\alpha
}-\frac{\Gamma(j+1-\alpha)}{\Gamma(j+1)}\right]  \nabla_{h}f\left(
t_{k}-t_{j}\right)
\]

and %

\[
S_{2}=\frac{1}{\Gamma\left(  2-\alpha\right)  }\overset{k}{\underset{i=1}{\sum}%
}\left(  \frac{(t_{k}-t_{j-1})^{2-\alpha}-(t_{k}-t_{j})^{2-\alpha}}%
{(2-\alpha)h}-(t_{k}-t_{j})^{1-\alpha}\right)  \left(  \nabla_{h}f\left(
t_{j}\right)  -\nabla_{h}f\left(  t_{j-1}\right)  \right)
\]

By using the fact that %

\begin{gather*}
0\leq\frac{(t_{k}-t_{j-1})^{2-\alpha}-(t_{k}-t_{j})^{2-\alpha}}{(2-\alpha
)h}-(t_{k}-t_{j})^{1-\alpha}=\frac{1}{h}\int_{t_{j-1}}^{t_{j}}\left(
(t_{k}-t)^{1-\alpha}-(t_{k}-t_{j})^{1-\alpha}\right)  dt\\
\leq(t_{k}-t_{j-1})^{1-\alpha}-(t_{k}-t_{j})^{1-\alpha}%
\end{gather*}

and %

\[
\left\vert \nabla_{h}f\left(  t_{j}\right)  -\nabla_{h}f\left(  t_{j-1}%
\right)  \right\vert \leq\left(  2h\right)  ^{\beta}\omega_{\beta}(f^{\prime
},2h)
\]

$\left\vert S_{2}\right\vert $ can be estimated by %

\[
\left\vert S_{2}\right\vert \leq\frac{\left(  2h\right)  ^{\beta}\omega
_{\beta}(f^{\prime},2h)}{\Gamma\left(  2-\alpha\right)  }%
\]

It remain to estimate $\left\vert S_{1}\right\vert$. To doing so, we use the
lemma \ref{rapportdegamma},%
\[
\dfrac{\Gamma(j+1-\alpha)}{\Gamma(j+1)}=j^{-\alpha}+\frac{1}%
{\Gamma(\alpha)}\Phi_{1-\alpha}\left(  j\right)
\]

with \qquad%
\[
\left\vert \Phi_{1-\alpha}\left(  j\right)  \right\vert \leq\frac{\Gamma(1+\alpha)}%
{2}j^{-\alpha-1}%
\]

Therefore%
\[
\frac{j^{1-\alpha}-(j-1)^{1-\alpha}}{1-\alpha}-\frac{\Gamma(j+1-\alpha
)}{\Gamma(j+1)}=\int_{t_{j-1}}^{t_{j}}\left(  s^{-\alpha}-j^{-\alpha}\right)
ds-\frac{1}{\Gamma(\alpha)}\Phi_{1-\alpha}\left(  j\right)
\]

and for every $j\geq2$%

\[
\left\vert \frac{j^{1-\alpha}-(j-1)^{1-\alpha}}{1-\alpha}-\frac{\Gamma
(j+1-\alpha)}{\Gamma(j+1)}\right\vert \leq\left(  j-1\right)  ^{-\alpha
}-j^{-\alpha}+\frac{1}{\Gamma(\alpha)}\left\vert \Phi_{1-\alpha}\left(  j\right)
\right\vert
\]

This leads to%

\begin{align*}
& \overset{k-1}{\underset{j=1}{\sum}}\left\vert \frac{j^{1-\alpha
}-(j-1)^{1-\alpha}}{1-\alpha}-\frac{\Gamma(j+1-\alpha)}{\Gamma(j+1)}%
\right\vert \\
& \leq\frac{1}{1-\alpha}-\Gamma\left(  2-\alpha\right)  +1-\left(  k-1\right)
^{-\alpha}+\frac{\alpha  }{2}\overset{k-1}%
{\underset{j=1}{\sum}}j^{-\alpha-1}\\
& \leq C
\end{align*}

with
\[
C=\frac{1}{1-\alpha}-\Gamma\left(  2-\alpha\right)  +1+\frac{\alpha
  }{2}\zeta(1+\alpha)>0
\]

where $\zeta(\cdot)$ is the Riemann zeta function.

Finally%

\[
\left\vert S_{1}\right\vert \leq\frac{h^{1-\alpha}}{\Gamma\left(
1-\alpha\right)  }C\left\Vert f^{\prime}\right\Vert _{\beta}%
\]

Now we can put the pieces together to get%

\[
\frac{\left\vert \varphi(t_{k})-\nabla_{h}^{\alpha}f(t_{k})\right\vert
}{h^{\beta}}\leq\left(  1+\frac{C}{\Gamma\left(  1-\alpha\right)  }\right)
\left\Vert f^{\prime}\right\Vert _{\beta}h^{1-\alpha-\beta}+\frac{2^{\beta}%
}{\Gamma\left(  2-\alpha\right)  }\omega_{\beta}(f^{\prime},2h)
\]

\end{proof}

The following theorem shows that the sequence $\left(  A_{h}^{\alpha}\right)
_{h}$ converges strongly to $A^{\alpha}.$

\begin{theorem}
Let $X_{\beta}$ be$\ $the space $X_{\beta}=\left\{  f\in H^{\beta}\text{ such
that }f^{\prime}\in H_{0}^{\beta}\right\}  $. 
\\Then for all $\beta$ such that $1-\alpha-\beta>0$ the sequence $\left(
A_{h}^{\alpha}\right)  _{h}$ converges strongly to the fractional derivative
$A^{\alpha}$ on $X_{\beta}$ \ as $h$ tends to $0.$
\end{theorem}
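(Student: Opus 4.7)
The plan is to use the auxiliary function $\varphi$ defined just above as a bridge between $A^{\alpha}f$ and $A_{h}^{\alpha}f$, via the decomposition
\[
A^{\alpha}f - A_{h}^{\alpha}f = (A^{\alpha}f - \varphi) + (\varphi - A_{h}^{\alpha}f),
\]
and to show each piece tends to $0$ in the $H^{\beta}$ norm.

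For the first piece, using the Riemann--Liouville representation of $A^{\alpha}f$ from the corollary in Section 3.2 together with the definition of $\varphi$, we have
\[
A^{\alpha}f(x) - \varphi(x) = \frac{1}{\Gamma(1-\alpha)}\int_{0}^{x}(x-t)^{-\alpha}\bigl(f'(t) - A_{h}f(t)\bigr)\,dt.
\]
Setting $g_{h} := f' - A_{h}f$, I would first check that $g_{h}\in H^{\beta}$ (note that $f'(0)=0$ since $f'\in H_{0}^{\beta}$ and $A_{h}f(0)=\nabla_{h}f(0)=f(0)/h=0$), and that $\|g_{h}\|_{\beta}\to 0$ by the strong convergence proposition of Section~2. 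I would then establish a Hardy--Littlewood type bound $\|I^{1-\alpha}g\|_{\beta}\leq \tfrac{C_{\alpha}}{\Gamma(2-\alpha)}\|g\|_{\beta}$ for any $g\in H^{\beta}$, by the standard splitting of $I^{1-\alpha}g(y)-I^{1-\alpha}g(x)$ into $\int_{0}^{x}s^{-\alpha}(g(y-s)-g(x-s))\,ds$ and $\int_{x}^{y}s^{-\alpha}g(y-s)\,ds$, controlling the first by $\|g\|_{\beta}|y-x|^{\beta}$ and the second by the Hölder property at $0$ (using $g(0)=0$), then absorbing the excess factor $|y-x|^{1-\alpha}$ via $|y-x|\leq 1$. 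Applied to $g_{h}$ this yields $\|A^{\alpha}f-\varphi\|_{\beta}\to 0$.

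For the second piece I would use the identity $A_{h}^{\alpha}=\mathcal{I}_{h}\nabla_{h}^{\alpha}$ (observed in the proof of the theorem defining $A_{h}^{\alpha}$) to decompose
\[
\varphi - A_{h}^{\alpha}f = (I-\mathcal{I}_{h})\varphi + \mathcal{I}_{h}(\varphi - \nabla_{h}^{\alpha}f).
\]
Lemma \ref{erreurinterpolation} combined with Lemma \ref{omegaphi} gives $\|(I-\mathcal{I}_{h})\varphi\|_{\beta}\leq 4\omega_{\beta}(\varphi,h)\leq \tfrac{32}{\Gamma(2-\alpha)}\|f'\|_{\beta}h^{1-\alpha-\beta}$, which vanishes under the hypothesis $1-\alpha-\beta>0$. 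The function $\mathcal{I}_{h}(\varphi-\nabla_{h}^{\alpha}f)$ is polygonal with vertices at $\bigl(t_{k},\,\varphi(t_{k})-\nabla_{h}^{\alpha}f(t_{k})\bigr)$, so by Lemma \ref{rac} and the fact that $|t_{j}-t_{i}|\geq h$ for $i\neq j$, the triangle inequality yields $\|\mathcal{I}_{h}(\varphi-\nabla_{h}^{\alpha}f)\|_{\beta}\leq 2\max_{k}|\varphi(t_{k})-\nabla_{h}^{\alpha}f(t_{k})|/h^{\beta}$; plugging in Lemma \ref{majphinabla} shows this also vanishes, using $1-\alpha-\beta>0$ together with $f'\in H_{0}^{\beta}$ (so that $\omega_{\beta}(f',2h)\to 0$).

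The main technical obstacle is the Hardy--Littlewood type boundedness of $I^{1-\alpha}$ on $H^{\beta}$ needed for the first piece; it is a direct but slightly delicate computation, and once established the rest of the argument is a plug-and-play assembly of the preparatory lemmas \ref{erreurinterpolation}, \ref{rac}, \ref{omegaphi} and \ref{majphinabla}.
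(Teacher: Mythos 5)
Your proposal is correct and follows essentially the same route as the paper: the paper's ``mixed term'' $R(-\lambda,A)A_hf$ inside the Balakrishnan integral produces exactly your bridge function $\varphi=I^{1-\alpha}A_hf$, its bound on the first piece $I_1$ is precisely the Hardy--Littlewood-type estimate you describe (carried out under the $\lambda$-integral and combined with the strong convergence $\left\Vert Af-A_hf\right\Vert_{\beta}\leq 6\omega_{\beta}(f',h)$), and the second piece is split as $(r_h\varphi)+\mathcal{I}_h(\varphi-\nabla_h^{\alpha}f)$ and controlled by Lemmas \ref{erreurinterpolation}, \ref{rac}, \ref{omegaphi} and \ref{majphinabla} exactly as you propose. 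The only difference is presentational (Riemann--Liouville kernel directly versus resolvent integrals plus Fubini), not mathematical.
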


\begin{proof}
For every $0\leq x<y\leq1,$%
\begin{align*}
&  \left(  A^{\alpha}-A_{h}^{\alpha}\right)  (f)(x)-\left(  A^{\alpha}%
-A_{h}^{\alpha}\right)  (f)(y)\\
&  =-\frac{\sin\pi\alpha}{\pi}\int_{0}^{+\infty}\lambda^{\alpha-1}\left(
R\left(  -\lambda,A\right)  \left(  Af\right)  -R\left(  -\lambda
,A_{h}\right)  \left(  A_{h}f\right)  \right)  \left(  x\right)  d\lambda\\
&  +\frac{\sin\pi\alpha}{\pi}\int_{0}^{+\infty}\lambda^{\alpha-1}\left(
R\left(  -\lambda,A\right)  \left(  Af\right)  -R\left(  -\lambda
,A_{h}\right)  \left(  A_{h}f\right)  \right)  \left(  y\right)  d\lambda
\end{align*}

by introducing a mixed term we get
\begin{align*}
& \left(  A^{\alpha}-A_{h}^{\alpha}\right)  (f)(x)-\left(  A^{\alpha}%
-A_{h}^{\alpha}\right)  (f)(y)\\
& =-\frac{\sin\pi\alpha}{\pi}\int_{0}^{+\infty}\lambda^{\alpha-1}R\left(
-\lambda,A\right)  \left(  \left(  Af-A_{h}f\right)  \left(  x\right)
-\left(  Af-A_{h}f\right)  \left(  y\right)  \right)  d\lambda\\
& +\frac{\sin\pi\alpha}{\pi}\int_{0}^{+\infty}\lambda^{\alpha-1}\left(
R\left(  -\lambda,A\right)  -R\left(  -\lambda,A_{h}\right)  \right)  \left(
A_{h}f\left(  x\right)  -\left(  A_{h}f\right)  \left(  y\right)  \right)
d\lambda
\end{align*}

Denote by $I_{1}(x,y)$ and $I_{2}(x,y)$ respectively the first and the second
integral in the equality above.

We begin by the first integral
\begin{align*}
& \left\vert I_{1}(x,y)\right\vert \\
& \leq\int_{0}^{+\infty}\lambda^{\alpha-1}\left\vert R\left(  -\lambda
,A\right)  \left(  \left(  Af-A_{h}f\right)  \left(  x\right)  -\left(
Af-A_{h}f\right)  \left(  y\right)  \right)  \right\vert d\lambda\\
& \leq\int_{0}^{+\infty}\lambda^{\alpha-1}\int_{0}^{x}e^{-\lambda t}\left\vert
\left(  Af-A_{h}f\right)  \left(  y-t\right)  -\left(  Af-A_{h}f\right)
\left(  x-t\right)  \right\vert dtd\lambda\\
& +\int_{0}^{+\infty}\lambda^{\alpha-1}\int_{x}^{y}e^{-\lambda t}\left\vert
\left(  Af-A_{h}f\right)  \left(  y-t\right)  \right\vert dtd\lambda
\end{align*}

which leads to
\[
\frac{I_{1}(x,y)}{\left\vert x-y\right\vert ^{\beta}}\leq\left\Vert
Af-A_{h}f\right\Vert _{\beta}\int_{0}^{+\infty}\lambda^{\alpha-1}\left(
\int_{0}^{y}e^{-\lambda t}dt\right)  d\lambda
\]

The estimate%

\[
\frac{I_{1}(x,y)}{\left\vert x-y\right\vert ^{\beta}}\leq\frac{6\Gamma
(\alpha)}{1-\alpha}\omega_{\beta}\left(  f^{\prime},h\right)
\]

follows from Fubini's theorem and inequality \ref{convergenceaalpha} .

Consider now the second integral

First notice that%
\begin{align*}
& \frac{\sin\pi\alpha}{\pi}\int_{0}^{+\infty}\lambda^{\alpha-1}R\left(
-\lambda,A\right)  A_{h}f(x)d\lambda\\
& =\frac{\sin\pi\alpha}{\pi}\int_{0}^{+\infty}\lambda^{\alpha-1}\left(
\int_{0}^{x}e^{-\lambda(x-t)}A_{h}f\left(  t\right)  dt\right)  d\lambda\\
& =\frac{\sin\pi\alpha}{\pi}\int_{0}^{x}\left(  \int_{0}^{+\infty}%
\lambda^{\alpha-1}e^{-\lambda(x-t)}d\lambda\right)  A_{h}f\left(  t\right)
dt\\
& =\dfrac{1}{\Gamma(1-\alpha)}\int_{0}^{x}(x-t)^{-\alpha}A_{h}f\left(
t\right)  dt
\end{align*}

Then
\begin{align*}
& \frac{\sin\pi\alpha}{\pi}\int_{0}^{+\infty}\lambda^{\alpha-1}\left(
R\left(  -\lambda,A\right)  -R\left(  -\lambda,A_{h}\right)  \right)
A_{h}f\left(  x\right)  d\lambda\\
& =\dfrac{1}{\Gamma(1-\alpha)}\int_{0}^{x}(x-t)^{-\alpha}A_{h}f\left(
t\right)  dt-\mathcal{I}_{h}\nabla_{h}^{\alpha}f(x)\\
& =(r_{h}\varphi)(x)+\mathcal{I}_{h}\left(  \varphi(x)-\nabla_{h}^{\alpha
}f(x)\right)
\end{align*}

From lemmas \ref{erreurinterpolation} and \ref{rac} we have for some $k$ and $m$ 
\[\frac{\sin\pi\alpha}{\pi}
\frac{\left\vert I_{2}\left(  x,y\right)  \right\vert }{\left\vert
x-y\right\vert ^{\beta}}\leq4\omega_{\beta}(\varphi,h)+\frac{\left\vert
\varphi(t_{k})-\nabla_{h}^{\alpha}f(t_{k})-\varphi(t_{m})+\nabla_{h}^{\alpha
}f(t_{m})\right\vert }{\left\vert t_{k}-t_{m}\right\vert ^{\beta}}%
\]

From lemmas \ref{omegaphi} and \ref{majphinabla} we deduce
\[\frac{\sin\pi\alpha}{\pi}
\frac{\left\vert I_{2}\left(  x,y\right)  \right\vert }{\left\vert
x-y\right\vert ^{\beta}}\leq2\left(  \frac{16}{\Gamma\left(  2-\alpha\right)  }+\frac{C}{\Gamma\left(
1-\alpha\right)  }+1\right)  \left\Vert f^{\prime}\right\Vert _{\beta
}h^{1-\alpha-\beta}+\frac{2^{\beta+1}}{\Gamma\left(  2-\alpha\right)  }%
\omega_{\beta}(f^{\prime},2h)\]
Hence
\begin{align*}
& \left\Vert \left(  A^{\alpha}-\nabla_{h}^{\alpha}\right)  (f)\right\Vert
_{\beta}\\
& \leq2\left(  \frac{16}{\Gamma\left(  2-\alpha\right)  }+\frac{C}{\Gamma\left(
1-\alpha\right)  }+1\right)  \left\Vert f^{\prime}\right\Vert _{\beta
}h^{1-\alpha-\beta}+\frac{2^{\beta+1}+6}{\Gamma\left(  2-\alpha\right)  }
\omega_{\beta}(f^{\prime},2h)\
\end{align*}

and the conclusion of the theorem holds.
\end{proof}

\section{Numerical examples}
In this section two examples are discussed.
\subsection{Exemple 1}

Consider the fractional derivative of $f(x)=x^{\mu}\ln x.$ The analytical
expression of the fractional derivative of $f$ is%

\[
A^{\alpha}f(x)=\frac{\Gamma(\mu+1)}{\Gamma(\mu+1-\alpha)}x^{\mu-\alpha}\left[
\ln x+\psi(\mu+1)-\psi(\mu+1-\alpha)\right]
\]

Where $\psi(\cdot)$ denote the digamma function see \cite[Formula (103)]{valerio}.
\\In the next tables error at the step size ${h}$ is the H\"{o}lderian error 
defined by 
\begin{equation}
\underset{0\leq i<j\leq1/h}{\max}\frac{\left\vert \left(  A^{\alpha}%
f-\nabla_{h}^{\alpha}f\right)  \left(  t_{i}\right)  -\left(  A^{\alpha
}f-\nabla_{h}^{\alpha}f\right)  \left(  t_{j}\right)  \right\vert }{\left\vert
t_{i}-t_{j}\right\vert ^{\beta}}\label{error}%
\end{equation}
\\According to our theoretical consideration the convergence is ensured by $h^{1-\alpha-\beta}$ and  $\omega_{\beta}(f^{\prime},2h)$.
\\Let us establish an estimation of $\omega_{\beta}(f^{\prime},h)$
\\For every $0\leq x<y\leq 1$
\[
f^{\prime}(y)-f^{\prime}(x)=\mu(y^{\mu-1}\ln y-x^{\mu-1}\ln x)+y^{\mu
-1}-x^{\mu-1}%
\]
Using the fact that 
\[
y^{\mu-1}\ln y-x^{\mu-1}\ln x=\int_{x}^{y}\frac{d}{dt}(t^{\mu-1}\ln
t)dt=\left(  \mu-1\right)  \int_{x}^{y}t^{\mu-2}\ln tdt+\frac{1}{\mu-1}\left(
y^{\mu-1}-x^{\mu-1}\right)
\]

Then for every $1+\beta<\beta^{\prime}<\mu$ we have%

\[
\int_{x}^{y}t^{\mu-2}\ln tdt=\int_{x}^{y}t^{\mu-\beta^{\prime}+\beta^{\prime
}-2}\ln tdt
\]

Setting $M=\underset{t\in\left]  0,1\right]  }{\max}\left\vert t^{\mu
-\beta^{\prime}}\ln t\right\vert $

we have

\[
\left\vert \int_{x}^{y}t^{\mu-2}\ln tdt\right\vert \leq M\int_{x}^{y}%
t^{\beta^{\prime}-2}dt=\frac{M}{\beta^{\prime}-1}\left(  y^{\beta^{\prime}%
-1}-x^{\beta^{\prime}-1}\right)
\]

Therefore%

\[
\left\vert f^{\prime}(y)-f^{\prime}(x)\right\vert \leq\frac{M\mu\left(
\mu-1\right)  }{\beta^{\prime}-1}\left(  y^{\beta^{\prime}-1}-x^{\beta
^{\prime}-1}\right)  +\left(  \frac{\mu}{\mu-1}+1\right)  \left(  y^{\mu
-1}-x^{\mu-1}\right)
\]

It follows that 
\begin{align*}
\frac{\left\vert f^{\prime}(y)-f^{\prime}(x)\right\vert }{\left\vert
y-x\right\vert ^{\beta}}  &  \leq\frac{M\mu\left(  \mu-1\right)  }%
{\beta^{\prime}-1}\left\vert y-x\right\vert ^{\beta^{\prime}-1-\beta}+\left(
\frac{\mu}{\mu-1}+1\right)  \left\vert y-x\right\vert ^{\mu-1-\beta}\\
&  \leq\left(  \frac{M\mu\left(  \mu-1\right)  }{\beta^{\prime}-1}+\left(
\frac{\mu}{\mu-1}+1\right)  \left\vert y-x\right\vert ^{\mu-\beta^{\prime}%
}\right)  \left\vert y-x\right\vert ^{\beta^{\prime}-1-\beta}%
\end{align*}

If $\left\vert y-x\right\vert \leq h$ then

\[
\frac{\left\vert f^{\prime}(y)-f^{\prime}(x)\right\vert }{\left\vert
y-x\right\vert ^{\beta}}\leq\left(  \frac{M\mu\left(  \mu-1\right)  }%
{\beta^{\prime}-1}+\left(  \frac{\mu}{\mu-1}+1\right)  h^{\mu-\beta^{\prime}%
}\right)  h^{\beta^{\prime}-1-\beta}%
\]
and 

\[
\omega_{\beta}(f^{\prime},h)\leq\left(  \frac{M\mu\left(  \mu-1\right)
}{\beta^{\prime}-1}+\left(  \frac{\mu}{\mu-1}+1\right)  h^{\mu-\beta^{\prime}%
}\right)  h^{\mu-1-\beta}%
\]

In case $\mu=3/2,\alpha=0.3,\beta=0.1$ the convergence is ensured since \[\mu-1-\beta=0.4>0.\]

The results concerning errors are presented in table \ref{table1} for $\mu=3/2,\\ \alpha=0.3,\beta=0.1$

\begin{table}[h]
\centering
\begin{tabular}
[c]{|l|l|l|}\hline
$h$ & Error \\\hline
$2^{-6}$ & 0.0079082 \\\hline
$2^{-7}$ & 0.0040833\\\hline
$2^{-8}$ & 0.0021392\\\hline
$2^{-9}$ & 0.0011478 \\\hline
$2^{-10}$ & 0.0006054 \\\hline
$2^{-11}$ & 0.0003150\\\hline
$2^{-12}$ & 0.0001622 \\\hline
\end{tabular}
\caption{ Error defined by (\ref{error})  when $f(x)=x^{\mu}\ln x$  for $\mu=3/2,\alpha=0.3$ and  $\beta=0.1$. }

\label{table1} 

\end{table}

In figure \ref{fig}, on the left the graphs of $A^{\alpha}f$ and $A_{h}^{\alpha}f$ are shown. On the right we give the H\"{o}lderian errors. 
\begin{figure}

     \centering
     \begin{subfigure}[b]{0.45\textwidth}
         \centering
         \includegraphics[width=\textwidth]{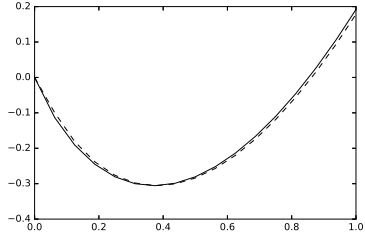}
         \caption{$A^{\alpha}f$ in the continuous line, $A_{h}^{\alpha}f$ in dotted line for $h=2^{-4}$.}
      \end{subfigure}
     \hfill
     \begin{subfigure}[b]{0.45\textwidth}
         \centering
         \includegraphics[width=\textwidth]{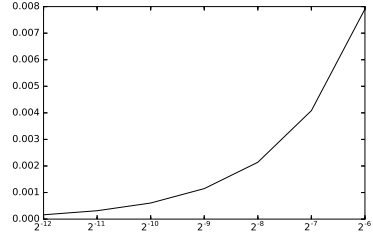}
         \caption{Holderian error with respect to step size $h$.}
     \end{subfigure}
        \caption{Comparison between $A^{\alpha}f$ and $A_{h}^{\alpha}f$ when $f(x)=x^{\mu}\ln x$ \\for $\mu=3/2,\alpha=0.3$ and  $\beta=0.1$.}
    \label{fig}
\end{figure}

\newpage
\subsection{Exemple 2}

For the second example we consider the fractional differential equation
presented in \cite{dietford}, for $t\in\left[  0,1\right]$.

\begin{equation}
D^{\alpha}y(t)=\frac{40320}{\Gamma(9-\alpha)}t^{8-\alpha}-3\frac
{\Gamma(5+\alpha/2)}{\Gamma(5-\alpha/2)}t^{4-\frac{\alpha}{2}}+\frac{9}%
{4}\Gamma(\alpha+1)+\left(  \frac{3}{2}t^{\frac{\alpha}{2}}-t^{4}\right)
^{3}-\left[  y(t)\right]  ^{\frac{3}{2}}
\label{exemple2}%
\end{equation}

The initial condition is $y(0)=0$.
The exact solution of this problem is \[y(t)=t^{8}-3t^{4+\frac{\alpha}{2}%
}+\frac{9}{4}t^{\alpha}.\]
\\For $\alpha=0.5,$ we display the results in table \ref{table2}  for $\beta=0.1$ and $\beta=0.01$ respectively.
\\Apparently, we need to use small  values for $\beta$ to increase the accuracy.

\begin{table}[h]
\centering
\begin{tabular}
[c]{|l|l|l|}\hline
$h$ & Errors for $\beta=0.1$ & Errors for $\beta=0.01$\\\hline
$2^{-7}$ & 0.0347581 & 0.0224598\\\hline
$2^{-8}$ & 0.0269360 & 0.0163528\\\hline
$2^{-9}$ & 0.0206910 & 0.0118018\\\hline
$2^{-10}$ & 0.0158085 &  0.0084716\\\hline
$2^{-11}$ & 0.0120388 & 0.0060613\\\hline
$2^{-12}$ & 0.0091502 & 0.0043283 \\\hline
$2^{-13}$ & 0.0069465 & 0.0030872\\\hline
\end{tabular}

\caption{ H\"{o}lderian errors for problem (\ref{exemple2}) with $\alpha=0.5$  }

\label{table2} 

\end{table}

\section{Conclusion}
\qquad In this work, we defined a fractional operator as a fractional power of a piecewise linear interpolation of a backward difference on a H\"{o}lder space. We proved the strong convergence  of this operator to fractional derivative, and we supported our results with examples. We think that we have now a kind of process to define  Euler-like formulas which contribute to solve numerically fractional differential equations in H\"{o}lder spaces.
 However, several questions can be the subject of further works, in particular, the analysis of the order of approximation, and the results that can be expected if  one replace the linear spline
$\mathcal{I}_{h}$ by a spline of degree $n>1$, or if one replaces the  operator  $\nabla_{h}$  by another more accurate approximation of the derivative.

\end{document}